\newtheorem{prop}{Proposition}[section]
\newtheorem{thm}[prop]{Theorem}
\newtheorem{lemma}[prop]{Lemma}
\newtheorem{cor}[prop]{Corollary}
\newtheorem{Defi}[prop]{Definition}
\newtheorem{Rem}[prop]{Remark}
\newtheorem{Exam}[prop]{Example}
\newtheorem{Exer}[prop]{Exercise}
\newtheorem{Expe}[prop]{Experiment}
\newtheorem{Cons}[prop]{Construction}
\newtheorem{Alg}[prop]{Algorithm}
\newtheorem{Prob}[prop]{Research Problem}
\newenvironment{defi}{\begin{Defi} \rm}{\end{Defi}}
\newenvironment{rem}{\begin{Rem} \rm}{\end{Rem}}
\newenvironment{ex}{\begin{Exam} \rm}{\end{Exam}}
\newenvironment{proof}{\noindent{\bf Proof.}\ }{\hspace*{\fill}$\diamond$\medskip\par}
\newcommand{\ff}{{\mathbb F}}
\newcommand{\fq}{{\mathbb F}_q}
\newcommand{\fqm}{{\mathbb F}_{q^m}}
\newcommand{\ba}{{\bf a}}
\newcommand{\bb}{{\bf b}}
\newcommand{\bc}{{\bf c}}
\newcommand{\bd}{{\bf d}}
\newcommand{\be}{{\bf e}}
\newcommand{\br}{{\bf r}}
\newcommand{\bs}{{\bf s}}
\newcommand{\bu}{{\bf u}}
\newcommand{\bv}{{\bf v}}
\newcommand{\bx}{{\bf x}}
\newcommand{\scrx}{{\cal X}}
\title{A characterization of MDS codes \\
that have an  error correcting pair}
\author{Irene M\'arquez-Corbella
\footnote{SECRET Project-Team - INRIA, Paris-Rocquencourt, B.P. 105, 78153 Le Chesnay Cedex France, E-mail: irene.marquez-corbella@inria.fr}
    \     and Ruud Pellikaan
\footnote{Department of Mathematics and Computing Science, Eindhoven University of Technology, P.O. Box 513, 5600 MB  Eindhoven, The Netherlands. E-mail: g.r.pellikaan@tue.nl} }
\begin{document}

\maketitle
\begin{abstract}
Error-correcting pairs were introduced in 1988 in the pre-print \cite{pellikaan:1988} that appeared in \cite{pellikaan:1992},
and were found independently in \cite{koetter:1992}, as a general algebraic method of decoding linear codes.
These pairs exist for several classes of codes.
However little or no study has been made for characterizing those codes. This article is an attempt to fill the vacuum left by the literature concerning this subject. Since every linear code is contained in  an MDS code of the same minimum distance over some finite field extension, see \cite{pellikaan:1996}, we have focused our study on the class of MDS codes.
Our main result states that an MDS code of minimum distance $2t+1$ has a $t$-ECP if and only if it is a generalized Reed-Solomon code.
A second proof is given using recent results \cite{mirandola:2015,randriambololona:2013} on the Schur product of codes.
\end{abstract}

\section{Introduction}
\label{Introduction}
Error-correcting pairs were introduced in \cite{pellikaan:1988,pellikaan:1992}, and independently in \cite{koetter:1992}, as a general algebraic method of decoding linear codes.
These pairs exist for several classes of codes such as for generalized Reed-Solomon, cyclic, alternant and algebraic geometry codes \cite{duursma:1993a,duursma:1994,koetter:1992,koetter:1996,pellikaan:1989,pellikaan:1992,pellikaan:1996}.
The aim of this paper is to characterize those  MDS codes that have a $t$-error correcting pair.
This was shown for $t\leq 2$ in \cite{pellikaan:1996}.

Section \ref{MDS:section} gives the background  of MDS codes.
Generalized Reed-Solomon codes and an equivalent way to describe such a code as a projective system on a rational normal curve in projective space is reviewed in Section \ref{GRS:section}. We give here also a survey of well-known results related to generalized Reed-Solomon codes which are used to set the notation and to recall some properties that are relevant for the proof of the main result.
Moreover, a classical result is stated: a rational rational curve in projective $r$ space is uniquely determined by $n$ of its points in case $n \geq r+2$. This classical result will be vital in our main result.

For further details on the notion of an error correcting pair see Section \ref{ECP:section} were we formally review this definition, detailing the state-of-art and the existence of error correcting pairs for some families of codes.

Section \ref{Puncturing:Section} gives a brief exposition of classical methods of constructing a shorter code out of a given one: the process of puncturing and shortening a code. In particular we will be concerned with the case of these operations for MDS codes.

Finally, in Section \ref{MT:section} we present the main result of this paper that states that every MDS code with minimum distance $2t+1$ that has a $t$-ECP belongs to the class of generalized Reed-Solomon codes.
In Section \ref{Second:Section} we extend recent results on the Schur product of codes
\cite{randriambololona:2013,mirandola:2015} to give an independent proof of our main result.

\subsection{Notation}
\label{Notation:section}

By $\mathbb F_q$ where $q$ is a primer power, we denote a finite field with $q$ elements.
The projective line over the finite field $\mathbb F_q$, denoted by $\mathbb P^1(\mathbb F_q)$, is the set $\mathbb F_q\cup \{ \infty\}$, and $\mathbb F_q^*$ denote the set of units of $\mathbb F_q$.

An $[n,k]$ linear code $C$ over $\mathbb F_q$ is a $k$-dimensional subspace of $\mathbb F_q^n$.
We will denote the length of $C$ by $n(C)$, its dimension by $k(C)$ and its minimum distance, $d(C)$.

Given two elements $\mathbf a$ and $\mathbf b$ on $\mathbb F_q^n$ the \emph{star multiplication} is defined by coordinatewise multiplication, that is
$\mathbf a * \mathbf b = (a_1b_1, \ldots, a_nb_n)$.
Then, $A*B$ is the code in $\fq ^n$ generated by $\left\{ \mathbf a*\mathbf b \mid \mathbf a \in A \hbox{ and }\mathbf b \in B\right\}$.

Note that in this paper  $A*B$ is not the set $\left\{ \mathbf a*\mathbf b \mid \mathbf a \in A \hbox{ and }\mathbf b \in B\right\}$
as in \cite{pellikaan:1992}, but the space generated by that set.

The \emph{standard inner multiplication} is defined by $\mathbf a\cdot \mathbf b = \sum_{i=1}^n a_ib_i$ of $\mathbf a$ and $\mathbf b$ on $\mathbb F_q^n$.
 Let $A$ and $B$ be two codes in $\mathbb F_q^n$.
Now $A \perp B$ if and only if $\mathbf a \cdot \mathbf b = 0$ for all $\mathbf a \in A$ and $\mathbf b \in B$.

\section{MDS codes}
\label{MDS:section}
 The \emph{Singleton bound} provides an upper bound for the minimum distance given by
$d(C) \leq n(C) - k(C) + 1$.
If equality holds then the minimum distance of $C$ achieves its maximum possible value, in this case we say that $C$ is a \emph{maximum distance separable code} or a MDS code, for short.

Trivial examples of MDS codes are the zero code of length $n$ which has parameters $[n,0,n+1]$ by definition and its dual, that is the whole space $\mathbb F_q^n$ which has parameters $[n,n,1]$. Moreover the $[n,1,n]$-repetition code over $\mathbb F_q$ and its dual are both MDS. Indeed the dual of an $[n,k]$ MDS code is an $[n,n-k]$ MDS code. Other well-known examples of MDS codes are (extended/generalized) Reed-Solomon codes \cite{reed:1960} which can be seen as algebraic geometry codes on the projective line, that is on algebraic curves of genus zero.
This family, that will be studied in detail in Section \ref{GRS:section}, is  very useful since  it is fundamental in several technological applications ranging from computer drives, CD or DVD players to digital imaging, see \cite{wicker:1995}.

Singleton introduced MDS codes in $1964$ \cite{singleton:1964} but curiously his bound was already discovered in $1952$ by Bush \cite{bush:1952}
in the context of orthogonal arrays. McWilliam and Sloane in \cite{macwilliams:1977} describe this class of codes as
\emph{'One of the most fascinating chapters in all of coding theory'}.
However these codes are not only of interest to coding theory  but also to a wide range of different areas such as projective geometry over finite fields and combinatorics, see \cite{alderson:2007, blokhuis:1990, bruen:1988, thas:1998} and more recently in the theory of quantum error correcting codes, see \cite{calderbank:1996}.

We have defined MDS codes as the codes that attain the maximum error detection/correction capability, but this is not the only property that makes them special. In the next theorem we have collected some properties characterizing MDS codes.
This result is presented without proof since it is well known. For more details we refer the reader to \cite{huffman:2003,macwilliams:1977}.

\begin{thm}
Let $C$ be an $[n,k]$ code over $\mathbb F_q$. Then, the following statements are equivalent:
\begin{enumerate}
\itemsep0em
\item $C$ is MDS.
\item $C^{\perp}$ is MDS.
\item Every $k$-tuple of columns of a generator matrix of $C$ is independent. 
\item Every set of $k$ coordinates form an information set.
\item Every $n-k$-tuple of columns of a parity check matrix of $C$ is independent.
\item $C$ is generated systematically by a matrix $G=\left(\begin{array}{cc}I_k & A\end{array}\right)$
where every square submatrix of $A$ is nonsingular and up to linear isometry $A$ is of the form:
$$
A = \left(\begin{array}{c|c}
    1 & \mathbf 1 \\
    \hline
    \mathbf 1^T & *
    \end{array} \right)\in \mathbb F_q^{k\times (n-k)}.
$$
\end{enumerate}
\end{thm}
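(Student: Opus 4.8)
The plan is to reduce every statement to a single combinatorial fact about matrices and then to exploit duality. The central tool I would establish first is the standard relation between the minimum distance and the columns of a parity check matrix: if $H$ is a parity check matrix of $C$, then a word $\mathbf c$ lies in $C$ exactly when $H\mathbf c^T=\mathbf 0$, that is, when the columns of $H$ indexed by $\supp(\mathbf c)$ are linearly dependent with the nonzero entries of $\mathbf c$ as coefficients. Hence $d(C)$ equals the smallest number of linearly dependent columns of $H$. Since $H$ has rank $n-k$, any $n-k+1$ of its columns are dependent, which re-proves the Singleton bound and shows that $d(C)=n-k+1$ holds if and only if no $n-k$ columns of $H$ are dependent, i.e. every $(n-k)$-tuple of columns of $H$ is independent. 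This is precisely the equivalence (1) $\Leftrightarrow$ (5).

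Next I would run the parallel argument on a generator matrix $G$ of $C$. For a $k$-subset $S$ of coordinates, a word $\mathbf c=\mathbf x G$ vanishes on $S$ precisely when $\mathbf x G_S=\mathbf 0$, where $G_S$ is the $k\times k$ submatrix of $G$ on the columns $S$; such a nonzero $\mathbf x$ exists iff $G_S$ is singular. Thus $C$ contains a nonzero word of weight at most $n-k$ iff some $k$-tuple of columns of $G$ is dependent, giving (1) $\Leftrightarrow$ (3). Statement (3) $\Leftrightarrow$ (4) is then immediate, since a $k$-set $S$ is an information set exactly when $G_S$ is invertible, i.e. when those $k$ columns are independent. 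Finally (2) follows by applying the already established equivalence (1) $\Leftrightarrow$ (5) to the code $C^\perp$: a parity check matrix of $C^\perp$ is a generator matrix of $C$ (as $C^{\perp\perp}=C$), and $n-\dim C^\perp=k$, so ``$C^\perp$ is MDS'' becomes ``every $k$-tuple of columns of $G$ is independent'', which is (3). This closes the cycle among (1)--(5).

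The remaining and most computational part is (6). By (4) the first $k$ coordinates form an information set, so row reduction puts $G$ in systematic form $(I_k \mid A)$ without changing $C$. I would then show that ``every square submatrix of $A$ is nonsingular'' is equivalent to (3): given a $k$-set $S$ consisting of the identity columns indexed by $I_1\subseteq\{1,\dots,k\}$ together with the $A$-columns indexed by $J_1$, Laplace expansion along the standard basis columns gives $\det G_S=\pm\det A_{[k]\setminus I_1,\, J_1}$, a square submatrix of $A$ of size $|J_1|=k-|I_1|$; hence all such $G_S$ are nonsingular iff all square submatrices of $A$ are. In particular every entry of $A$ is nonzero. The normalization to the bordered form is achieved by monomial (diagonal) coordinate maps, which are linear isometries: first scale the last $n-k$ coordinates so that the top row of $A$ becomes $\mathbf 1$, then scale the rows $2,\dots,k$ --- each compensated by the scaling of the matching coordinate in the $I_k$ block, so that the systematic shape is preserved --- to turn the first column of $A$ into $\mathbf 1^T$ while leaving the already normalized top row untouched. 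Since scaling rows and columns by nonzero scalars preserves nonsingularity of every submatrix, the normalized $A$ still has all square submatrices nonsingular. The main obstacle I anticipate is bookkeeping rather than conceptual: keeping track of signs and index sets in the determinant identity, and checking that the monomial normalization preserves both the form $(I_k\mid A)$ and the all-minors-nonsingular property.
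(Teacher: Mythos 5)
The paper does not prove this theorem at all: it is stated as a collection of well-known characterizations and the reader is referred to Huffman--Pless and MacWilliams--Sloane, so there is no in-paper argument to compare against. Your proposal is the standard textbook proof and it is correct. The two pillars are sound: the identification of $d(C)$ with the minimal size of a linearly dependent set of columns of $H$ gives (1)$\Leftrightarrow$(5) and, run on $G$, gives (1)$\Leftrightarrow$(3)$\Leftrightarrow$(4); duality then comes for free from the observation that a generator matrix of $C$ is a parity check matrix of $C^\perp$ and $n-\dim C^\perp=k$, yielding (2). For (6), the Laplace-expansion identity $\det G_S=\pm\det A_{[k]\setminus I_1,\,J_1}$ correctly sets up a bijection between $k$-subsets $S$ of columns of $(I_k\mid A)$ and square submatrices of $A$ (the empty submatrix corresponding to $S=\{1,\dots,k\}$), so ``all square submatrices of $A$ nonsingular'' is genuinely equivalent to (3) and not just implied by it. The one step that deserves the care you already give it is the normalization: scaling rows of $G$ changes the basis, not the code, so it must be compensated on the $I_k$ block by a diagonal coordinate isometry, and since that isometry only touches the first $k$ coordinates it leaves $A$ alone; column scalings on the last $n-k$ coordinates handle the top row. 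Both operations are monomial maps and preserve nonsingularity of every minor, so the bordered form is reached within the linear isometry class. The only cosmetic caveat is the degenerate cases $k\in\{0,n\}$, where the column conditions are vacuous and one falls back on the paper's convention $d=n+1$ for the zero code; this does not affect the argument.
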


Other properties are for example the statement that every shortened MDS code is again MDS or the uniqueness of the weight distribution of a MDS code which up to some explicit exceptions, it contains all weights in the range $[n-k+1, n]$, see \cite{ezerman:2011}.

In general, the maximum length of MDS codes with given dimension and defining field is unknown. Indeed the main conjecture on MDS codes states that if there is an $[n,k]$ MDS code over $\mathbb F_q$ with $2\leq k \leq n-2$. Then, $n\leq q+1$ unless $q$ is even and $k=3$ or $k=q-1$ in which case $n\leq q+2$. Several special cases are solved. Most recently the case $q$ a prime is settled \cite{ball:2012}. But this question is a large unsolved problem.
Wicker \cite{wicker:1995} describes an analogous question on arcs in the projective space which has become one of the most interesting problems in projective geometry over Galois fields. However at least asymptotically, that is for large $q$ the class of Reed-Solomon codes are optimal in the family of MDS codes, see \cite{bruen:1988}. Moreover for all $k\in \{ 1, \ldots, n\}$ there is a $[q-1,k]$ Reed-Solomon code over $\mathbb F_q$ and we can add an overall parity check obtaining a $[q,k]$ MDS code known as \emph{Extended RS} code. Thus a natural approach to find long MDS codes is to lengthen a fixed code while preserving the MDS property, but in general no more than a finite parity check can be added keeping the desired property. Notice also that in all the known cases when an $[n,k]$ MDS code exists, a GRS code with the same parameters also exists.

\section{Generalized Reed-Solomon codes}
\label{GRS:section}

We write $L_k$ for the set of polynomials in $\mathbb F_q[X]$ of degree at most $k-1$, that is
$L_k = \left\{ f\in \mathbb F_q[X] \mid \deg(f(X)) \leq k-1\right\}$.

Let $\mathbf a = (a_1, \ldots, a_n)\in \mathbb F_q^n$ be an $n$-tuple of mutually distinct elements of $\mathbb F_q$, and let $\mathbf b=(b_1, \ldots, b_n)$ be an $n$-tuple of nonzero elements of $\mathbb F_q$. Note that the definition of $\mathbf a$ requires the length $n$ to be at most $q$.

Consider the \emph{evaluation map}
$\begin{array}{cccc}
\mathrm{ev}_{\mathbf a, \mathbf b}: & L_k & \longrightarrow & \mathbb F_q^n
\end{array}$
where the elements $\mathrm{ev}_{\mathbf a, \mathbf b}(f) = \mathbf b * f(\mathbf a)$ arises from evaluating any polynomial $f(X)\in L_k$ at the points of $\mathbf a$ and  scaling by $\mathbf b$. Recall that the evaluation of a polynomial $f(X)=f_0 + f_1 X+ \ldots f_{k-1}X^{k-1}\in L_k$ at the points of $\mathbf a$ is equivalent to multiply the coefficient vector of the polynomial $f(X)$ by the $k\times n$ Vandermonde matrix
$G_{\mathbf a}$ with entry $ a_j^{i-1}$ at the $i$-th row and the $j$-th column. Therefore,
$$\mathbf b* f(\mathbf a) =
\left( \begin{array}{cccc}f_0 & f_1 & \ldots & f_{k-1}\end{array}\right)
\left( \begin{array}{cccc}
b_1 & b_2 & \ldots & b_n \\
b_1a_1 & b_2a_2 & \ldots & b_na_n \\
\vdots & \vdots & \ddots & \vdots \\
b_1a_1^{k-1} & b_2a_2^{k-1} & \ldots & b_na_n^{k-1}
\end{array}\right) $$

\begin{rem}
Let $f\in L_k$ such that $f_{k-1}$ is the coefficient of $X^{k-1}$ in $f$. Then, the evaluation of the polynomial $f$ at $\infty$ is defined as follows: $f(\infty) = f_{k-1}$.
\end{rem}

\begin{defi}
Let $\mathbf a$ be an $n$-tuple of mutually distinct elements of $\mathbb P^1(\mathbb F_q)$
and $\mathbf b$ be an $n$-tuple of nonzero elements of $\mathbb F_{q}$.
Then, the \emph{generalized Reed-Solomon code} (in short, GRS code) of dimension $k$ defined by $\mathbf a$ and $\mathbf b$ is the image of the vector space $L_k$ under the evaluation $\mathrm{ev}_{\mathbf a, \mathbf b}$, that is
$$\mathrm{GRS}_k(\mathbf a, \mathbf b) = \left\{ \mathbf b * f(\mathbf a) \mid f\in L_k\right\} \hbox{ with } 0\leq k \leq n \leq q+1$$
The $n$-tuple $\mathbf a$ is called  the {\em evaluation-point sequence} and its elements $a_i$ are called \emph{code locators} and the values $b_i$ are called \emph{column multipliers}.
\end{defi}

\begin{rem}
From this notation of a GRS code it is not clear over which field the code is defined and from which field the elements $a_j$ and $b_j$
are chosen, they might be elements in an extension field $\mathbb F_{q^m}$.
We say that the code $\mathrm{GRS}_k(\mathbf a, \mathbf b)$ is {\em defined over} $\mathbb F_q$ if the evaluated polynomials have coefficients in
$\mathbb F_q$, $a_j \in \mathbb P^1(\mathbb F_q)$ and $b_j\in \mathbb F_q^*$ for all $j$.
So a limitation of GRS codes is the fact that the length $n$ is bounded by the size of the field $\mathbb F_q$ over which they are defined, indeed $n\leq q+1$. If necessary we take a sufficiently large extension of $\mathbb F_q$.
\end{rem}

GRS codes are MDS codes. Furthermore, the dual of a GRS code is also GRS, in particular $\mathrm{GRS}_k(\mathbf a, \mathbf b)^{\perp} = \mathrm{GRS}_{n-k}(\mathbf a, \mathbf b')$ for some $n$-tuple $\mathbf b'$ of nonzero elements of $\mathbb F_q$ that is explicitly known. These results are well-known and the interested reader is referred to \cite{huffman:2003, lint:1999, macwilliams:1977}.

Since the monomials $1, X, \ldots, X^{k-1}$ form a basis of $L_k$. Evaluating these monomials gives the canonical generator matrix of the code $\mathrm{GRS}_k(\mathbf a, \mathbf b)$ whose $i$-th row is the element
$\mathbf b* \mathbf a^i$ with $i=0, \ldots, k-1$ where we define by induction $\mathbf a^1=\mathbf a$ and $\mathbf a^{i+1} = \mathbf a * \mathbf a^i$.
By definition we may have $a_j=\infty$ for some $j \in \{ 1, \ldots, n\}$. Then, $\left(\begin{array}{cccc}0 & \ldots &0 & b_j\end{array}\right)^T$ is the $j$-th corresponding column vector of the canonical generator matrix.

\begin{rem}
\label{Rem-Main}
Consider MDS codes with parameters $[n,0]$, $[n,1]$, $[n,n-1]$ and $[n,n]$:
\begin{itemize}
\item The $\mathbb F_q$-linear codes with parameters $[n,0]$ and $[n,n]$ are the trivial codes $\{0\}$ and $\mathbb F_{q}^{n}$ which are not only MDS and dual to each other, but also GRS codes if $n\leq q+1$. Indeed, let $C=\{0\}$ then, $C = \mathrm{GRS}_0(\mathbf a, \mathbf b)$ for every $n$-tuple $\mathbf a$ of mutually distinct elements of $\mathbb P^1(\mathbb F_q)$ and every $n$-tuple $\mathbf b$ of nonzero elements of $\mathbb F_{q}$.
Thus, $C^{\perp}= \mathbb F_{q}^{n} = \mathrm{GRS}_{n}(\mathbf a, \mathbf b')$  for some $\mathbf b'\in \mathbb F_{q}^n$.

\item An MDS code $C$ with parameters $[n,1]$ defined over $\mathbb F_{q}$, is a GRS code if $n\leq q+1$. Indeed, $C$ is generated by a word $\mathbf b\in \mathbb F_{q}^{n}$ with nonzero entries. Thus $C = \mathrm{GRS}_1(\mathbf a, \mathbf b)$ where $\mathbf a$ is any $n$-tuple of mutually distinct elements of $\mathbb P^1(\mathbb F_{q})$.
Moreover its dual $C^{\perp}$, which is an MDS code with parameters $[n,n-1]$ is again a GRS code.
\end{itemize}
Therefore $\mathbb F_{q^m}$-linear MDS codes with parameters $[n,0]$, $[n,1]$, $[n,n-1]$ and $[n,n]$ are GRS codes defined over $\mathbb F_{q^m}$
if $m\geq \log_q(n-1)$.
\end{rem}

\begin{lemma}
\label{t=1}
Let $C$ be an $[n,n-2,3]$ code over $\mathbb F_q$. Then, $C$ is a GRS code that is defined over $\mathbb F_q$.
\end{lemma}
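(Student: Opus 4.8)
The plan is to pass to the dual code, where the dimension drops to $2$ and the generalized Reed--Solomon structure can be read off almost immediately. Since $C$ is MDS, its dual $C^\perp$ is also MDS (property 2 of the characterization theorem above) with parameters $[n,2,n-1]$. If I can show that $C^\perp$ is a GRS code of dimension $2$ defined over $\mathbb{F}_q$, then by the fact recalled above that the dual of a GRS code is again a GRS code with the same code locators, namely $\mathrm{GRS}_2(\mathbf a,\mathbf b)^\perp = \mathrm{GRS}_{n-2}(\mathbf a,\mathbf b')$ for an explicit $\mathbf b'$ with nonzero entries, I get $C = (C^\perp)^\perp = \mathrm{GRS}_{n-2}(\mathbf a,\mathbf b')$, which is the desired conclusion. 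So the whole problem reduces to the two-dimensional case.

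To handle $C^\perp$, fix a $2\times n$ generator matrix $H$ of it. By the equivalent characterizations of MDS codes recalled above (every $k$ columns of a generator matrix are independent, here with $k=2$), every two columns of $H$ are linearly independent. In particular each column is a nonzero vector of $\mathbb{F}_q^2$ and no two columns are proportional, so the columns represent $n$ mutually distinct points of the projective line $\mathbb{P}^1(\mathbb{F}_q)$; this forces $n\le q+1$ automatically, since $\mathbb{P}^1(\mathbb{F}_q)$ has exactly $q+1$ points. For each $j$ I scale the $j$-th column by a suitable factor $b_j\in\mathbb{F}_q^*$ so that it becomes $(1,a_j)^T$ with the $a_j\in\mathbb{F}_q$ pairwise distinct, or $(0,1)^T$, in which case I set $a_j=\infty$. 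Rescaling columns does not change the code, it only records the multipliers $\mathbf b$. After this normalization $H$ is precisely the canonical generator matrix
$$
H = \begin{pmatrix}
b_1 & b_2 & \cdots & b_n \\
b_1 a_1 & b_2 a_2 & \cdots & b_n a_n
\end{pmatrix}
$$
of $\mathrm{GRS}_2(\mathbf a,\mathbf b)$, with the convention recalled above that a locator $a_j=\infty$ contributes the column $(0,b_j)^T$. Since the $a_j$ lie in $\mathbb{P}^1(\mathbb{F}_q)$ and the $b_j$ lie in $\mathbb{F}_q^*$, the code $C^\perp$ is a GRS code defined over $\mathbb{F}_q$, and dualizing gives the claim.

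I do not expect a serious obstacle here: the entire content is the observation that the MDS condition forces the columns of the $2\times n$ matrix to be distinct projective points, after which the GRS structure is simply read off. The only points needing care are the bookkeeping of the column at infinity and the verification that column scaling changes only the multiplier vector $\mathbf b$ and not the code itself. The very small cases are uniform with this argument as well, and in fact the codes with $n\le 3$ (dimension $n-2\le 1$) are already covered by Remark \ref{Rem-Main}.
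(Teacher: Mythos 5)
Your proof is correct and follows essentially the same route as the paper's: both pass to the $[n,2,n-1]$ MDS dual and read the $\mathrm{GRS}_2(\mathbf a,\mathbf b)$ structure directly off a generator matrix, using the fact that the MDS condition makes the columns pairwise non-proportional, i.e.\ $n$ distinct points of $\mathbb P^1(\mathbb F_q)$. The only (cosmetic) difference is that the paper first puts the matrix in systematic form and rescales by a vector $\mathbf x$ to force the multiplier to be $\mathbf 1$, whereas you keep the scalars in the multiplier vector $\mathbf b$ --- which is fine, and if anything slightly cleaner.
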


\begin{proof}Let $C$ be an $[n,n-2,3]$ code over $\mathbb F_q$.
Then, $C$ is an MDS code, so its dual is also MDS and is systematic at the first two positions.
Hence $C$ has a parity check matrix $H=\left(\begin{array}{c|c}I_2&P\end{array}\right)$, where $P$ is a $2 \times (n-2)$ matrix with entries $p_{ij}$ in $\mathbb F_q$.
Moreover all entries of $P$ and all determinants of $2 \times 2 $ minors of $P$ are nonzero, since $C$ is MDS.
Let $\mathbf x = \left(\begin{array}{ccccc}1 & 1 & p_{13} & \ldots & p_{1n}\end{array}\right)$.
by replacing $C$ by $\mathbf x*C$, that is $C^\perp$ by $\mathbf x^{-1}*C$,
we may assume that $p_{1j}=1$ for $3 \leq j \leq n$. Thus
$$
H_{C} = \left(\begin{array}{ccccc}
1 & 0 & 1 & \ldots & 1 \\
0 & 1 & p_{23} & \ldots & p_{2n}
\end{array}\right) \in \mathbb F_q^{2\times n}.
$$
Let $\mathbf a$ have entries $a_1 = 0$, $a_2 = \infty$ and $a_j = p_{2j}$ for $3\leq j \leq n$.
Then, $\mathbf a$ consists of mutually distinct entries in $\mathbb P^1(\mathbb F_q)$.
Let $\mathbf 1\in \mathbb F_q^n$ be the all-ones vector.
Then, $H$ is the canonical generator matrix of the code $\mathrm{GRS}_2(\mathbf a, \mathbf 1)$.
Therefore $C^{\perp}$ is a GRS code defined over $\mathbb F_q$. So this holds also for $C$.
\end{proof}

GRS codes can also be viewed as algebraic geometry (AG) codes on the projective line, that is the curve of genus zero. For this description we refer to \cite{stichtenoth:1993}.

\subsection{GRS codes and fractional transformations}

We can introduce polar coordinates on $\mathbb P^1(\mathbb F_q)$ via the map
$$\begin{array}{cccc}
\varphi : & \mathbb P^1(\mathbb F_q) & \longrightarrow & \mathbb F_q^2 \setminus \{(0,0)\}
\end{array}$$
where $\varphi(z) = (z:1)$ if $z \in \mathbb F_q$ and $\varphi(z) = (1:0)$ if $z=\infty$. The inverse of $\varphi$ is
$$\begin{array}{cccc}\psi: & \mathbb F_q^2 \setminus \{(0,0) \} &\longrightarrow & \mathbb F_q^* \times \mathbb P^1(\mathbb F_q)\end{array}$$
where $\psi(u,v) = (v, \frac{u}{v})$ if $v\neq 0$ and $\psi(u,v)=(u,\infty)$ if $v=0$. We will denote the components of $\psi$ by $\psi_1$ and $\psi_2$.

Let $\mathbb F_q[X,Y]_l$ denote the set of homogeneous polynomials over $\mathbb F_q$ of degree $l$ in two variables. Then, for any polynomial $f\in L_k$ the corresponding homogenous polynomial, denoted by $f_H$ and obtained by introducing an additional variable $Y$, belongs to $\mathbb F_q[X,Y]_{k-1}$, that is to say:

$$f_H(X,Y)=Y^{k-1}f\left(\frac{X}{Y}\right)\in \mathbb F_q[X,Y]_{k-1} \hbox { for all } f \in L_k \setminus L_{k-1}.$$
With this definition the evaluation of every polynomial on the projective line is given by
$$f(z) = f_H(\varphi(z)) \hbox{ for every } f \in L_k \hbox{ and }z \in \mathbb P^1(\mathbb F_q).$$

Let $\mathrm{GL}(n,\mathbb F_q)$ denote the linear group of $n\times n$ non-singular matrices with entries in
$\mathbb F_q$. Then, $\mathrm{GL}(2,\mathbb F_q)$ is a transformation group that act on $\mathbb F_q^2\setminus \{(0,0)\}$, on the projective line and also on the linear space $\mathbb F_q[X,Y]_l$ via the operators $\phi$ defined below for every $2\times 2$ non-singular matrix
$$M=\left(\begin{array}{cc} A & B \\ C & D\end{array}\right)\in \mathrm{GL}(2,\mathbb F_q).$$
\begin{enumerate}
\item $\phi\left(M,(u,v)\right) = (u,v) M^T = (Au+Bv, Cu+Dv)$, for all $(u,v) \in \mathbb F_q^2 \setminus \{(0,0)\}$.
\item $\phi\left(M,z\right) = \psi_2\left(\phi\left(M,\varphi(z)\right)\right)$, for all $z\in \mathbb P^1(\mathbb F_q)$.

In particular for $z \in \mathbb F_q$ we have that $\phi\left(M,z\right) = \frac{Az+B}{Cz+D}$ if $Cz+D \neq 0$, and $\infty$ otherwise. While for $z=\infty$ we obtain
$\phi\left(M,z\right) = \frac{A}{C}$ if $C \neq 0$, and $\infty$ otherwise.
The set of all these maps form the set of \emph{fractional transformation of the projective line}.

\item $\phi\left(M^{-1},P(X,Y)\right) = P(AX+BY, CX+DY)$, for all $P\in \mathbb F_q[X,Y]_l$.

    Therefore we can extend the previous operator to any polynomial $f\in L_k$ by setting:
    $$\phi\left(M,f(z)\right) = \phi(M, f_H(\varphi(z))).$$
\end{enumerate}

Now we define the map
$$\begin{array}{cccc}
\theta: & \mathrm{GL}(2,\mathbb F_q) \times \mathbb P^1(\mathbb F_q) & \longrightarrow & \mathbb F_q^{*}
\end{array}$$
by $\theta(M,z) = \psi_1(\phi(M,\varphi(z)))$. Therefore for $z \in \mathbb F_q$ we have that $\theta(M,z) = Cz+D$ if $Cz+D \neq 0$, and $Az+D$ otherwise. Whereas for $z=\infty$ we have $\theta(M,z) = C$ if $C\neq 0$, and $A$ otherwise.

The following proposition shows that different values of $\mathbf a$ and $\mathbf b$ give rise to the same GRS code
and that the pair $(\mathbf a, \mathbf b)$ is unique up to the action of fractional transformations.

\begin{prop}
\label{GRS:Automorphism}
Let $2 \leq k \leq n-2$. Then, $\mathrm{GRS}_k(\mathbf a, \mathbf b) = \mathrm{GRS}_k(\mathbf c, \mathbf d)$ if and only if there exists $M\in \mathrm{GL}(2, \mathbb F_q)$ and $\lambda \in \mathbb F_q^*$ such that
$$
\begin{array}{ccc}
c_i = \phi(M,a_i) & \hbox{ and } & d_i = \lambda\theta(M, a_i)^{k-1} b_i
\end{array}
$$
for all $i=1, \ldots ,n$.
\end{prop}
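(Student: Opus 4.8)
The plan is to prove both directions of the equivalence. The "if" direction is the easier one: assuming such an $M$ and $\lambda$ exist, I would show directly that the generator matrices of $\mathrm{GRS}_k(\ba,\bb)$ and $\mathrm{GRS}_k(\bc,\bd)$ generate the same code by tracking how the fractional transformation acts on the evaluation. Specifically, for each polynomial $f \in L_k$ with homogenization $f_H \in \ff_q[X,Y]_{k-1}$, the substitution $\phi(M^{-1}, f_H)$ produces another polynomial in $L_k$ (since $\mathrm{GL}(2,\ff_q)$ acts on the degree-$(k-1)$ homogeneous forms), and the key identity is that
$$
f_H(\varphi(c_i)) = f_H(\phi(M,\varphi(a_i))) = \theta(M,a_i)^{-(k-1)} \, (\phi(M^{-1},f_H))_H(\varphi(a_i)),
$$
which follows from the definitions of $\theta$ and $\phi$ together with the homogeneity of $f_H$. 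Multiplying through by $d_i = \lambda \theta(M,a_i)^{k-1} b_i$, the $\theta$-powers cancel and one gets that $\bd * f(\bc)$ equals $\lambda$ times $\bb * g(\ba)$ for $g = \phi(M^{-1}, f_H)$ dehomogenized. Since $g$ ranges over all of $L_k$ as $f$ does, and $\lambda \neq 0$, the two evaluation images coincide.

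\emph{For the converse}, suppose $\mathrm{GRS}_k(\ba,\bb) = \mathrm{GRS}_k(\bc,\bd)$. This is where the hypotheses $2 \le k \le n-2$ and the classical uniqueness result advertised in the introduction become essential. The idea is to view each GRS code as a projective system: the columns of the canonical generator matrix, read up to scaling, give $n$ points on a rational normal curve in $\proj^{k-1}$. Concretely, the map $z \mapsto (1 : z : z^2 : \cdots : z^{k-1})$ (suitably homogenized to include $z = \infty$) embeds $\proj^1(\ff_q)$ as such a curve, and the code locators $a_i$ become the $n$ points $\varphi(a_i)$ on it, weighted by the column multipliers $b_i$. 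Equality of the two codes forces the two generator matrices to differ by left multiplication by some $N \in \mathrm{GL}(k,\ff_q)$ and right multiplication by a diagonal matrix $\mathrm{diag}(\mu_1,\dots,\mu_n)$ of nonzero scalars. Projectively, $N$ is a linear automorphism of $\proj^{k-1}$ carrying one rational normal curve (through the points indexed by $\ba$) to the other (through the points indexed by $\bc$).

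\emph{The main obstacle} — and the crux of the whole proposition — is to show that this projective automorphism $N$ is induced by a fractional transformation $M \in \mathrm{GL}(2,\ff_q)$ of the underlying $\proj^1$. Here I would invoke the cited classical fact that, when $n \ge (k-1)+2 = k+1$, a rational normal curve in $\proj^{k-1}$ is uniquely determined by $n$ of its points; the condition $k \le n-2$ guarantees $n \ge k+2 > k+1$, so both sets of $n$ columns lie on a \emph{unique} rational normal curve, and the lower bound $k \ge 2$ ensures $\proj^{k-1}$ is at least a line so the Veronese picture is nondegenerate. Since $N$ maps the unique rational normal curve through the $a_i$-points onto the unique one through the $c_i$-points, it restricts to a projective isomorphism between the two copies of $\proj^1$; every such isomorphism is a fractional transformation, giving the desired $M$ with $c_i = \phi(M,a_i)$. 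Finally, reading off how $N$ acts coordinate-wise recovers the scaling relation $d_i = \lambda\,\theta(M,a_i)^{k-1} b_i$, where the factor $\theta(M,a_i)^{k-1}$ is exactly the Jacobian-type weight by which homogenization of degree $k-1$ transforms under $M$, and $\lambda$ absorbs the global scalar coming from $N$. I expect verifying that this weight is precisely $\theta(M,a_i)^{k-1}$, and handling the points at infinity uniformly, to be the most computation-heavy part, but it is routine given the explicit formulas for $\phi$ and $\theta$ established above.
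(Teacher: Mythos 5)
The paper offers no proof of its own here --- it simply cites D\"ur \cite{dur:1987} and Huffman \cite{huffman:1998} --- so your sketch is in effect a reconstruction of the classical argument, and it follows the same route those sources take: the ``if'' direction by direct computation with the homogenized action, the ``only if'' direction via the projective system on the rational normal curve and the uniqueness statement of Proposition \ref{prop-RNC}. The ``if'' direction as you set it up is correct: the identity $f_H(\varphi(c_i))=\theta(M,a_i)^{-(k-1)}\bigl(\phi(M^{-1},f_H)\bigr)(\varphi(a_i))$ follows from $\phi(M,\varphi(a_i))=\theta(M,a_i)\,\varphi(c_i)$ and homogeneity, and the map $f_H\mapsto \phi(M^{-1},f_H)$ is a bijection of $\mathbb F_q[X,Y]_{k-1}$, so the images coincide.

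Three points in the converse need tightening. First, equality (not mere equivalence) of the two codes gives $G_{\bc,\bd}=N\,G_{\ba,\bb}$ for a single $N\in\mathrm{GL}(k,\mathbb F_q)$ with \emph{no} diagonal factor on the right; the extra $\mathrm{diag}(\mu_1,\dots,\mu_n)$ you allow is not there, and you cannot afford it, since $n$ independent column scalings would destroy the single global $\lambda$ in $d_i=\lambda\,\theta(M,a_i)^{k-1}b_i$. Second, the uniqueness threshold is $r+3=k+2$ points in general position (Proposition \ref{prop-RNC}), not $k+1$ as you first write; this is harmless because $k\le n-2$ does give $n\ge k+2$, but the claim ``unique through $k+1$ points'' is false. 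Third, the assertion that the induced bijection of the curve ``is a fractional transformation'' is precisely the classical fact that the stabilizer of the rational normal curve in $\mathrm{PGL}_k$ is the symmetric-power image of $\mathrm{PGL}_2$; it is the crux and should be either cited or argued. A clean way to finish once $c_i=\phi(M,a_i)$ is secured: letting $S(M)$ denote the induced matrix on $L_k$, one has $S(M)\nu(\varphi(a_i))^T=\theta(M,a_i)^{k-1}\nu(\varphi(c_i))^T$, so $N S(M)^{-1}$ fixes the $n\ge k+1$ points $\nu(\varphi(c_i))$ of $\mathbb P^{k-1}$, which are in general position since the code is MDS; hence $NS(M)^{-1}=\lambda I$, and comparing eigenvalues yields $d_i=\lambda\,\theta(M,a_i)^{k-1}b_i$ with one global $\lambda$.
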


\begin{proof}
See \cite[Theorem 4]{dur:1987} or \cite[Theorem 2.6]{huffman:1998}.
\end{proof}

\begin{rem}\label{r-3points}Let $M\in \mathrm{GL}(2,\mathbb F_q)$ and $\lambda \in \mathbb F_q^*$. Then,
$M$ and $\lambda M$ define the same fractional transformation of the projective line.

Let $\left(a_1,a_2,a_3\right)$ be a triple of mutual distinct points on the projective line.
Then, there is a unique fractional transformation of the projective line that sends
$(a_1,a_2,a_3)$ to $(0,1,\infty)$.
Hence there is a unique fractional transformation of the projective line that sends
$(a_1,a_2,a_3)$ to another  triple $(c_1,c_2,c_3)$ of three distinct points on the projective line.
\end{rem}

\begin{cor}
\label{GRS:Automorphism-cor}
Let $2 \leq k \leq n-2$ and $\mathrm{GRS}_k(\mathbf a, \mathbf b) = \mathrm{GRS}_k(\mathbf c, \mathbf d)$.
If $\mathbf a$ and $\mathbf c$  coincide at $3$ distinct positions,
then $\mathbf a = \mathbf c$ and $\mathbf d = \lambda \mathbf b$ for some $\lambda \in \mathbb F_q^*$.
\end{cor}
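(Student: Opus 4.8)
The plan is to derive everything from Proposition \ref{GRS:Automorphism} together with the uniqueness statement of Remark \ref{r-3points}. Since $2 \le k \le n-2$ and $\mathrm{GRS}_k(\mathbf a, \mathbf b) = \mathrm{GRS}_k(\mathbf c, \mathbf d)$, Proposition \ref{GRS:Automorphism} supplies a matrix $M \in \mathrm{GL}(2, \mathbb F_q)$ and a scalar $\lambda \in \mathbb F_q^*$ with $c_i = \phi(M, a_i)$ and $d_i = \lambda\, \theta(M, a_i)^{k-1} b_i$ for every $i$. The whole argument then reduces to pinning down $M$.

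First I would use the hypothesis. Say $\mathbf a$ and $\mathbf c$ agree at three distinct positions $i_1, i_2, i_3$. Because the entries of an evaluation-point sequence are mutually distinct, the three values $a_{i_1}, a_{i_2}, a_{i_3}$ are pairwise distinct points of $\mathbb P^1(\mathbb F_q)$, and the equalities $a_{i_j} = c_{i_j} = \phi(M, a_{i_j})$ say exactly that the fractional transformation $\phi(M, \cdot)$ fixes these three points. By Remark \ref{r-3points} a fractional transformation is determined by its effect on three distinct points; since the identity transformation also fixes $a_{i_1}, a_{i_2}, a_{i_3}$, it follows that $\phi(M, \cdot)$ is the identity. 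As $M$ and any of its nonzero scalar multiples induce the same transformation, this forces $M = \mu I$ for some $\mu \in \mathbb F_q^*$.

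Two conclusions remain. From $\phi(\mu I, \cdot) = \mathrm{id}$ we get $c_i = \phi(M, a_i) = a_i$ for all $i$, i.e.\ $\mathbf c = \mathbf a$. For the column multipliers I would substitute $M = \mu I$ (so $A = D = \mu$ and $B = C = 0$) into the definition of $\theta$: for a finite code locator $Cz+D = \mu \neq 0$ gives $\theta(M, z) = \mu$, and for $z = \infty$ the case $C = 0$ gives $\theta(M, \infty) = A = \mu$, so $\theta(M, a_i) = \mu$ in every case. Hence $d_i = \lambda \mu^{k-1} b_i$, and setting $\lambda' := \lambda \mu^{k-1} \in \mathbb F_q^*$ yields $\mathbf d = \lambda' \mathbf b$, as claimed.

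The argument is essentially bookkeeping once Proposition \ref{GRS:Automorphism} is in hand; the only real content is the rigidity fact that a fractional transformation fixing three distinct points is the identity, which is exactly what Remark \ref{r-3points} provides. The one place to stay careful is the evaluation of $\theta$ at $\infty$, where the defining formula branches on whether $C$ vanishes, but for a scalar matrix $C = 0$, so this case is handled cleanly and returns the same value $\mu$ as at the finite locators.
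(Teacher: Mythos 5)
Your proposal is correct and follows essentially the same route as the paper: invoke Proposition \ref{GRS:Automorphism} to obtain $M$ and $\lambda$, use the three fixed points together with Remark \ref{r-3points} to conclude the fractional transformation is the identity, and read off $\mathbf c = \mathbf a$ and $\mathbf d = \lambda' \mathbf b$. Your extra bookkeeping with $M = \mu I$ and $\theta(M, a_i) = \mu$ is a welcome refinement of a step the paper leaves implicit, but it is not a different argument.
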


\begin{proof}
Suppose $2 \leq k \leq n-2$ and $\mathrm{GRS}_k(\mathbf a, \mathbf b) = \mathrm{GRS}_k(\mathbf c, \mathbf d)$
and that $\mathbf a$ and $\mathbf c$  coincide at $3$ distinct positions.
Then, there exists $M\in \mathrm{GL}(2, \mathbb F_q)$ and $\lambda \in \mathbb F_q^*$ such that
$c_i = \phi(M,a_i)$ and $d_i = \lambda\theta(M, a_i)^{k-1} b_i$ for all $i$ by Proposition \ref{GRS:Automorphism}.
Now $c_i=a_i$, $c_j=a_j$, and $c_l=a_l$ for some $1 \leq i<j<l \leq n$.
The identity map and $\phi$ are fractional transformations that leave $a_i$, $a_j$ and $a_l$ fixed.
Hence $\phi $ is the identity map by Remark \ref{r-3points}. So $\mathbf a = \mathbf c$ and $\mathbf d = \lambda \mathbf b$.
\end{proof}

\subsection{GRS and generalized Cauchy codes}
In this section we refer to  \cite{oberst:1985,dur:1987}.
Let $\mathbf a$ be an $n$-tuple of mutually distinct elements of $\mathbb P^1(\mathbb F_q)$, so $n\leq q+1$, and let $\mathbf c$ be an $n$-tuple of nonzero elements of $\fq$.
Define
$$\begin{array}{ccccc}
[a_i, a_j] = a_i - a_j, &
[\infty, a_j] = 1 &
\hbox{ and } &
[a_i, \infty] = -1 &
\hbox{ for } a_i, a_j \in \mathbb F_q
\end{array}$$

\begin{defi}
Let $A(\ba,\mathbf c)$ be the $k\times (n-k)$ matrix with the entries
$$
\frac{c_{j+k}}{c_i[a_{j+k},a_i]} \ \mbox{ for } 1\leq i \leq k,\ 1\leq j \leq n-k.
$$
The {\em generalized Cauchy} code  $C_k(\ba, \mathbf c)$ is an $[n,k,n-k+1]$ code defined by the generator matrix  $\left(\begin{array}{c|c}I_k&A(\ba,\mathbf c)\end{array}\right)$.
\end{defi}

\begin{prop}\label{pcauchy}
Let $\ba$ be an $n$-tuple of mutually distinct elements of $\mathbb P^1(\mathbb F_q)$, and
let $\bb $ be an $n$-tuple of nonzero elements of $\fq $.
Let
$$
c_i= \left\{
\begin{array}{ll}
b_i\prod_{t=1,t\not=i}^k[a_i,a_t] & \mbox{ if } 1 \leq i \leq k,\\
b_i\prod_{t=1}^k[a_i,a_t]         & \mbox{ if } k+1\leq i \leq n.
\end{array}
\right.
$$
Then, $\mathrm{GRS}_k(\ba ,\bb) = C_k(\ba ,\bc )$.
\end{prop}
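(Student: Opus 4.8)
The plan is to prove equality of the two $[n,k]$ codes by showing that every row of the systematic generator matrix $\left(I_k \mid A(\ba,\bc)\right)$ of the generalized Cauchy code is a codeword of $\mathrm{GRS}_k(\ba,\bb)$. Since both codes are MDS of the same dimension $k$ (the Cauchy code by definition, the GRS code by the results recalled above) and the $k$ rows in question are linearly independent by virtue of the identity block $I_k$, exhibiting $k$ independent GRS codewords that span $C_k(\ba,\bc)$ yields $C_k(\ba,\bc)\subseteq\mathrm{GRS}_k(\ba,\bb)$, and the equality of dimensions then forces the two codes to coincide. The bridge between the two descriptions is Lagrange interpolation: I will produce, for each $i$, a polynomial $f_i\in L_k$ whose associated GRS codeword $\bb * f_i(\ba)$ reproduces the $i$-th row of $\left(I_k\mid A(\ba,\bc)\right)$.

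Concretely, for each $i\in\{1,\dots,k\}$ I set
$$f_i(X) = \frac{1}{b_i}\prod_{t=1,\,t\neq i}^{k}\frac{X-a_t}{a_i-a_t}\in L_k.$$
Evaluating and scaling, the coordinate $b_\ell f_i(a_\ell)$ at a position $\ell\le k$ equals $1$ when $\ell=i$ and vanishes when $\ell\neq i$, since then the factor indexed by $t=\ell$ is zero; this recovers the identity block. It remains to check the positions $j+k$ for $1\le j\le n-k$, where the GRS codeword has coordinate
$$b_{j+k}\,f_i(a_{j+k}) = \frac{b_{j+k}}{b_i}\prod_{t=1,\,t\neq i}^{k}\frac{a_{j+k}-a_t}{a_i-a_t} = \frac{b_{j+k}}{b_i}\prod_{t=1,\,t\neq i}^{k}\frac{[a_{j+k},a_t]}{[a_i,a_t]}.$$

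The heart of the argument is then a direct comparison of this expression with the entry $\frac{c_{j+k}}{c_i\,[a_{j+k},a_i]}$ of $A(\ba,\bc)$. Substituting the prescribed values $c_i=b_i\prod_{t\neq i}[a_i,a_t]$ and $c_{j+k}=b_{j+k}\prod_{t=1}^{k}[a_{j+k},a_t]$, and splitting off the factor $[a_{j+k},a_i]$ from the product in the numerator of $c_{j+k}$, the two occurrences of $[a_{j+k},a_i]$ cancel, leaving exactly $\frac{b_{j+k}}{b_i}\prod_{t\neq i}\frac{[a_{j+k},a_t]}{[a_i,a_t]}$, which matches $b_{j+k}f_i(a_{j+k})$. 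This identifies each Cauchy row as the GRS codeword $\bb * f_i(\ba)$ and completes the inclusion.

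The step that needs the most care — and the one I expect to be the main obstacle — is the presence of $\infty$ among the code locators $a_i$, since then the naive quotients $a_i-a_t$ and the Lagrange formula above are not literally defined. Here I would fall back on the homogeneous/projective description from the previous subsection, interpreting $f_i(a_{j+k})$ through $f_{i,H}(\varphi(a_{j+k}))$ and replacing each difference $a_i-a_t$ by the bracket $[a_i,a_t]$, whose values at $\infty$ were fixed as $[\infty,a_j]=1$ and $[a_i,\infty]=-1$. With these conventions the same cancellation should go through verbatim, but one must verify that the bracket values were defined precisely so as to make the leading-coefficient evaluation $f(\infty)=f_{k-1}$ consistent with the finite case.
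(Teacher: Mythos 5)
Your proof is correct, but note that the paper does not actually prove this proposition at all: it simply cites Oberst and D\"ur \cite{oberst:1985,dur:1987}, so your direct Lagrange-interpolation argument is a genuine self-contained alternative rather than a variant of the paper's argument. The structure is sound: the polynomials $f_i(X)=\frac{1}{b_i}\prod_{t\neq i}\frac{X-a_t}{a_i-a_t}$ lie in $L_k$, reproduce the identity block, and the cancellation of the single factor $[a_{j+k},a_i]$ between $c_{j+k}$ and the denominator of the Cauchy entry is exactly right; combined with the fact that both codes have dimension $k$ (the evaluation map on $L_k$ is injective since $n\geq k$, and the identity block gives rank $k$ for the Cauchy generator), the inclusion forces equality. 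Your MDS remark is not actually needed for this. The one loose end is the case $\infty\in\{a_1,\dots,a_n\}$, which you flag but leave as ``should go through verbatim''; it does, and the verification is short enough that you should include it. If $a_{j+k}=\infty$ then $f_i(\infty)$ is the leading coefficient $\frac{1}{b_i}\prod_{t\neq i}[a_i,a_t]^{-1}$, while $c_{j+k}=b_{j+k}\prod_t[\infty,a_t]=b_{j+k}$ and $[a_{j+k},a_i]=1$, so both sides give $\frac{b_{j+k}}{b_i}\prod_{t\neq i}(a_i-a_t)^{-1}$. If $a_i=\infty$ with $i\leq k$, the correct basis element is $f_i(X)=\frac{1}{b_i}\prod_{t\neq i}(X-a_t)$ (leading coefficient $1/b_i$, so $b_if_i(\infty)=1$), and the two signs $[a_{j+k},\infty]=-1$ appearing in $c_{j+k}$ and in the denominator $[a_{j+k},a_i]$ cancel. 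If $a_t=\infty$ for some $t\leq k$ with $t\neq i$, then $f_i$ must vanish at $\infty$, i.e.\ drop to degree $k-2$, which is exactly what replacing the factor $\frac{X-a_t}{a_i-a_t}$ by $\frac{[X,\infty]}{[a_i,\infty]}=1$ accomplishes, and again the brackets $[a_{j+k},\infty]=[a_i,\infty]=-1$ cancel in the ratio. So the conventions $[\infty,a]=1$, $[a,\infty]=-1$ together with $f(\infty)=f_{k-1}$ are precisely what make your computation uniform, and with these three checks written out your proof is complete.
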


\begin{proof} See \cite{oberst:1985} and \cite[Theorem 2]{dur:1987}.
\end{proof}

\begin{prop}
\label{pGRSdefoverFq}
Let $2 \leq k \leq n-2$.
If $C=\mathrm{GRS}_k(\ba, \bb)$ is defined over $\fqm$ and has a generator matrix over $\fq $,
then $C$ is defined over $\fq$.
\end{prop}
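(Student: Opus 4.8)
The plan is to exploit the Galois action of the Frobenius automorphism together with the uniqueness of the pair $(\ba,\bb)$ recorded in Proposition \ref{GRS:Automorphism} and Corollary \ref{GRS:Automorphism-cor}. Write $\sigma$ for the Frobenius automorphism $x\mapsto x^q$ of $\fqm$, extended coordinatewise to $\fqm^n$ and fixing $\infty$ on the projective line. The first observation is that a generator matrix $G$ with entries in $\fq$ is fixed by $\sigma$, so every codeword $\bx G$ satisfies $\sigma(\bx G)=\sigma(\bx)G\in C$; hence $\sigma(C)=C$. Thus the hypothesis translates into the Galois-invariance $\sigma(C)=C$, which is what I will actually use.

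Next I would compute the effect of $\sigma$ on a GRS code. Applying $\sigma$ to the canonical generator matrix, whose rows are $\bb*\ba^i$ for $0\le i\le k-1$ (with the column $(0,\dots,0,b_j)^T$ at an infinite locator), and using that $\sigma$ is a field homomorphism fixing $\infty$, one obtains the canonical generator matrix of $\mathrm{GRS}_k(\sigma(\ba),\sigma(\bb))$. Since $\sigma$ is additive and commutes with taking $\fqm$-spans, this yields $\sigma(\mathrm{GRS}_k(\ba,\bb))=\mathrm{GRS}_k(\sigma(\ba),\sigma(\bb))$.

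The locators $a_i$ need not be $\sigma$-fixed, so before invoking the uniqueness corollary I would normalize three of them. Since $n\ge k+2\ge 4$, the first three locators $a_1,a_2,a_3$ are distinct, and by Remark \ref{r-3points} there is a fractional transformation $N$ over $\fqm$ sending $(a_1,a_2,a_3)$ to $(0,1,\infty)$. By the ``if'' direction of Proposition \ref{GRS:Automorphism}, the pair $(\bc,\bd)$ defined by $c_i=\phi(N,a_i)$ and $d_i=\theta(N,a_i)^{k-1}b_i$ satisfies $C=\mathrm{GRS}_k(\bc,\bd)$, and now $c_1,c_2,c_3\in\{0,1,\infty\}$ are $\sigma$-fixed. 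Rescaling $\bd$ by the global constant $d_1^{-1}$, which does not change the code, I may also assume $d_1=1$.

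Finally I would combine the two ingredients. From $\sigma(C)=C$ and the computation above, $\mathrm{GRS}_k(\bc,\bd)=\mathrm{GRS}_k(\sigma(\bc),\sigma(\bd))$. The locators $\bc$ and $\sigma(\bc)$ agree at positions $1,2,3$, so Corollary \ref{GRS:Automorphism-cor} (applicable because $2\le k\le n-2$) forces $\bc=\sigma(\bc)$ and $\sigma(\bd)=\mu\bd$ for some $\mu\in\fqm^*$. The equality $\bc=\sigma(\bc)$ says every $c_i$ lies in $\mathbb{P}^1(\fq)$. Evaluating $\sigma(\bd)=\mu\bd$ at the first coordinate, where $d_1=1$, gives $\mu=1$, whence $\sigma(\bd)=\bd$ and every $d_i\in\fq^*$. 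Therefore $C=\mathrm{GRS}_k(\bc,\bd)$ is defined over $\fq$. The step I expect to be the crux is the descent of the column multipliers: that $\bc$ is $\sigma$-fixed follows immediately from the corollary, but making $\bd$ simultaneously $\sigma$-fixed relies on the normalization $d_1=1$ (equivalently, on a Hilbert~90 argument forcing the norm of $\mu$ to equal $1$), and one must check that this normalization is compatible with the coordinate that is held fixed.
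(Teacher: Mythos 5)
Your proof is correct, but it takes a genuinely different route from the paper's. The paper argues in the style of the Sidelnikov--Shestakov attack: it passes to the generalized Cauchy representation $C_k(\ba,\bc)$ of Proposition \ref{pcauchy}, normalizes $c_1=1$ and $(a_1,a_2,a_{k+1})=(0,1,\infty)$, and then reads off, entry by entry from the reduced echelon form $\left(\begin{array}{c|c}I_k&P\end{array}\right)$ over $\fq$, that all the $a_i$ and $c_i$ lie in $\fq$. You instead run a Galois-descent argument: a generator matrix over $\fq$ gives $\sigma(C)=C$ for the Frobenius $\sigma$, the identity $\sigma(\mathrm{GRS}_k(\ba,\bb))=\mathrm{GRS}_k(\sigma(\ba),\sigma(\bb))$ reduces everything to the uniqueness statement of Corollary \ref{GRS:Automorphism-cor}, and the normalization $(c_1,c_2,c_3)=(0,1,\infty)$ together with $d_1=1$ pins down the pair so that $\sigma$-invariance of the code forces $\sigma$-invariance of $(\bc,\bd)$. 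Both proofs ultimately rest on D\"ur's uniqueness theorem (Proposition \ref{GRS:Automorphism}), but you use it as the engine of the whole argument, whereas the paper uses it only for the initial normalization and then computes explicitly. Your version is shorter and more conceptual, and it isolates the hypothesis actually needed ($\sigma(C)=C$, which is equivalent to having a generator matrix over $\fq$); the paper's version is constructive --- it recovers the defining pair over $\fq$ directly from the $\fq$-rational echelon form, in the spirit of the Sidelnikov--Shestakov key recovery that is reused in Proposition \ref{t=2}. The step you flagged as delicate, the descent of the column multipliers, is indeed handled correctly by the normalization $d_1=1$: the scalar $\mu$ with $\sigma(\bd)=\mu\bd$ is evaluated at the first coordinate and equals $1$, so no genuine Hilbert~90 cocycle computation is required.
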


\begin{proof}
The following proof is similar to the proof of Sidelnikov-Shestakov \cite{sidelnikov:1992}
in their attack of the code based cryptosystem using GRS codes,
where they find explicitly and efficiently the pair $(\ba ,\bb)$ from a generator matrix of a $\mathrm{GRS}_k(\ba ,\bb)$ code.

Let $C$ be a GRS code defined over $\fqm$ with generator matrix $G$ defined over $\mathbb F_q$.
Hence $C=\mathrm{GRS}_k(\mathbf a',\mathbf b')$ for $n$-tuples $\mathbf a'$ and $\mathbf b'$ defined over $\fqm$.
We will show that there are $n$-tuples $\mathbf a$ and $\mathbf b$ defined over $\mathbb F_q$ such that
$C=\mathrm{GRS}_k(\mathbf a, \mathbf b)$.
By Proposition \ref{pcauchy} a GRS code is defined over $\fqm$ if and only if its corresponding Cauchy code $C_k(\mathbf a, \mathbf c)$ is defined over $\fqm$.

Without loss of generality we may assume that:
\begin{itemize}
\item $c_1 = 1$, since $c_1 \in \fqm^*$ and $C_k(\mathbf a, \mathbf c') = C_k(\mathbf a, \mathbf c)$ with $c'_j = \frac{c_j}{c_1}$ for all $j$.
\item $a_1=0$, $a_2 = 1$ and $a_{k+1}=\infty$ after applying fractional transformation, see Proposition \ref{GRS:Automorphism}.
\end{itemize}

The matrix $G$ has entries in $\mathbb F_q$. So its reduced echelon form which is of the form $\left( \begin{array}{c|c} I_k & P\end{array}\right)$ has also entries in $\mathbb F_q$. Thus the entries $p_{ij}$ of $P$ are in $\mathbb F_q$.
By Proposition \ref{pcauchy} we have that:
$$p_{ij} = \frac{c_{k+j}}{c_i [a_{k+j}, a_i]} \hbox{ for } 1\leq i \leq k \hbox{ and } 1\leq j \leq n-k$$

Thus:
\begin{enumerate}
\item $p_{i1} = \frac{c_{k+1}}{c_i}\in \fq$ for all $1\leq i \leq k$, since $a_{k+1}= \infty$.

So $c_{k+1}\in \fq$, since $c_1=1$. And therefore, $c_{i}\in \fq$ for all $1\leq i \leq k+1$.
\item\label{it:2} $p_{1j} = \frac{c_{k+j}}{a_{k+j}}\in \fq$ for all $1\leq j \leq n-k$, since $c_1 = 1$ and $a_1 = 0$.
\item\label{it:3} $p_{2j} = \frac{c_{k+j}}{c_2 ( a_{k+j}-1)}\in \fq$ for all $1\leq j \leq n-k$, since $a_2 = 1$.

So $\frac{c_{k+j}}{( a_{k+j}-1)}\in \fq$ for all $1\leq j \leq n-k$, since $c_2 \in \fq $.

\item[] The quotient $\frac{p_{1j} }{p_{2j}}$ of the elements in (\ref{it:2}) and (\ref{it:3}) gives that
$1-a_{k+j}^{-1} \in \fq$ for all $1\leq j \leq n-k$. So $a_{k+j}\in \fq $ and therefore, by (\ref{it:2}), also $c_{k+j} \in \fq $ for all $1\leq j \leq n-k$.
\item Finally $p_{i,k+2}=\frac{c_{k+2}}{c_i(a_{k+2}-a_i)}\in \fq $ for all $1\leq i \leq k$.

Therefore $a_i \in \fq $, since $c_{k+2}, a_{k+2} \in \fq $ and $c_i \in \fq$  for all $1\leq i \leq k$.
\end{enumerate}

We have thus proved that $C$ is a Cauchy code defined over $\mathbb F_q$.
Hence $C$ is also a GRS code defined over $\mathbb F_q$.
\end{proof}

\begin{rem}Let $C$ be an $\fq$-linear code. The extended code $C\otimes \fqm $ is the  $\fqm$-linear code generated by $C$. If $G$ is a generator matrix of $C$ with entries in $\fq$. Then, $G$ is also a generator matrix of $C\otimes \fqm $
but now over $\fqm$.
In the following we will encounter the situation that $C$ is an $\fq$-linear code of length $n$
where we have to extend the field $\fq $ to $\fqm$,
for instance to ensure that $C\otimes \fqm $ has a codeword of weight $n$,
and arrive to the conclusion that $C\otimes \fqm $ is a GRS code defined over $\fqm$.
Then, $C$ is also a GRS code defined over $\fq $ by Proposition \ref{pGRSdefoverFq}.
\end{rem}

\subsection{GRS codes and the rational normal curve}

Let us mention the connection between linear codes and affine or projective varieties by menas of projective systems \cite[\S 1.1.2]{tsfasman:1991}.
An $n$-tuple of points $(P_1, \ldots, P_n)$ in ${\Bbb P}^r(\mathbb F_q)$  is a {\em projective system} if not all these points lie in a hyperplane.
A projective system is called an $n$-{\em arc} in ${\Bbb P}^r(\mathbb F_q)$ if the points of the system are in {\em general position},
that is if no $r+1$ points of them lie on a hyperplane.

Let $\mathcal P=(P_1, \ldots, P_n)$ be a projective system in ${\Bbb P}^r (\mathbb F_q)$ where $P_j$ is given by the homogeneous coordinates $(p_{0j}:p_{1j}:\ldots :p_{rj})$. We define the associated $(r+1)\times n$-matrix $G_\mathcal P$ as the matrix with $P_j^T$ as $j$-th column. Then, $G_\mathcal P$ has rank $r+1$, since not all points lie in a hyperplane. That is, $G_\mathcal P$ is the generator matrix of a nondegenerate $[n,r+1]$ code over $\mathbb F_q$.

Conversely, let $C$ be a nondegenerate $[n,k]$ code over $\mathbb F_q$ with generator matrix $G$. Take the columns of $G$ as homogeneous coordinates of points in ${\Bbb P}^{k-1}(\mathbb F_q)$, this gives the associated projective system $\mathcal P_G$ over $\mathbb F_q$ of $G$. Furthermore the code has minimum distance $d$ if an only if $n-d$ is the maximal number of points of $\mathcal P_G$ in a hyperplane of ${\Bbb P}^{k-1}(\mathbb F_q)$.
Hence $n$-arcs in ${\Bbb P}^r(\mathbb F_q)$ correspond one-to-one to generalized equivalence classes of MDS codes of length $n$ and dimension $r+1$ over
$\mathbb F_q$. See \cite[Remark 13.9]{hirschfeld:2008}.

Consider the map ${\Bbb P}^1 \rightarrow {\Bbb P}^r$ given by
$$
\begin{array}{ccc}
(x:y) & \longmapsto & (x^r : x^{r-1}y: \ldots : x^{r-i}y^{i}: \ldots : xy^{r-1}: y^r).
\end{array}
$$
Then, the image of this map in ${\Bbb P}^r$ is a curve of degree $r$ and is denoted by $\mathcal X_r$.
A {\em rational normal curve} in ${\Bbb P}^r$ of degree $r$ is a projective transformation of $\mathcal X_r$. See \cite[Example 7.35]{hirschfeld:2008}

The columns of the canonical matrix of $GRS_{r+1}(\mathbf a, \mathbf b)$, considered as homogeneous coordinates, are points of the curve
$\mathcal X_r(\mathbb F_q)$.
Therefore a GRS code of dimension $r+1$ can be described as a projective system of points on a rational normal curve of degree $r$ in ${\Bbb P}^r(\mathbb F_q)$.
The converse is also true.

Now we state a classical result on rational normal curves.

\begin{prop}\label{prop-RNC}
Through any $r+3$ points in ${\Bbb P}^r(\mathbb F_q)$ in general position there passes a unique rational normal curve.
\end{prop}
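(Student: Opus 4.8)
The plan is to put the $r+3$ points into a normal form by a projective change of coordinates, exhibit one explicit rational normal curve through them of a transparent "inverse-linear" shape, and then show that the normal-form data force this curve uniquely; existence and uniqueness will both fall out of the same computation. First I would invoke the standard fact that $\mathrm{PGL}(r+1,\mathbb F_q)$ acts simply transitively on ordered sets of $r+2$ points in general position to choose a transformation $T$ carrying the first $r+2$ of the given points to the standard frame $e_0,\dots,e_r,\mathbf 1$, where $e_i$ is the $i$-th coordinate point and $\mathbf 1=(1:\cdots:1)$. Writing the last point as $T(P_{r+3})=(\lambda_0:\cdots:\lambda_r)$, I would extract from general position two concrete constraints by inspecting which coordinate hyperplanes the points occupy: every $\lambda_i$ is nonzero (else $P_{r+3}$ and the $r$ coordinate points $e_j$, $j\neq i$, would fill the hyperplane $\{x_i=0\}$), and the $\lambda_i$ are pairwise distinct (else $P_{r+3}$, the unit point, and $r-1$ coordinate points would lie on a hyperplane $\{x_i=x_j\}$). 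Since the $\lambda_i$ are then $r+1$ distinct nonzero scalars, this step also silently supplies $q\ge r+2$, so enough distinct parameters exist on the line.

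Because both the class of rational normal curves and the notion of general position are $\mathrm{PGL}(r+1,\mathbb F_q)$-invariant, it suffices to treat the normalized configuration. Here I would use the description of a rational normal curve as $t\mapsto(f_0(t):\cdots:f_r(t))$ for an arbitrary basis $f_0,\dots,f_r$ of the forms of degree $r$. Since the parametrization is injective and passes through each $e_i$, each $f_m$ must vanish at the $r$ (distinct, finite) parameters of the other coordinate points, hence factors as a scalar times $\prod_{i\neq m}(t-t_i)$; after reparametrizing to place $\mathbf 1$ at $\infty$, passage through $\mathbf 1$ forces these scalars equal, so the curve is $t\mapsto(\tfrac{1}{t-t_0}:\cdots:\tfrac{1}{t-t_r})$. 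Imposing passage through $(\lambda_0:\cdots:\lambda_r)$ at the parameter $0$ then pins down $t_i=1/\lambda_i$ up to the single remaining scaling of $t$, which does not change the curve as a set. For existence I read off that the $t_i=1/\lambda_i$ are distinct and nonzero, so this formula does define a rational normal curve through all $r+3$ points; for uniqueness I observe that the same computation shows every such curve coincides with it, and transporting back by $T^{-1}$ finishes the argument. Throughout, the $3$-transitivity of fractional transformations recorded in Remark~\ref{r-3points} is what legitimizes normalizing the parameters $0,\infty$ together with the scaling used in the reparametrization.

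The main obstacle I anticipate is the bookkeeping in this normalization-and-parametrization step: checking that the parameters of the coordinate points, of the unit point, and of $P_{r+3}$ are genuinely distinct (which follows from injectivity of the parametrization and the points being distinct), handling the point at infinity so that the factored form $f_m=\prod_{i\neq m}(t-t_i)$ and the inverse-linear normal form remain valid across all of $\mathbb P^1(\mathbb F_q)$, and verifying that the leftover reparametrization freedom exactly matches the one-parameter ambiguity in the $t_i$, so that no spurious second curve survives.

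A cleaner but less self-contained alternative would route uniqueness through the code dictionary of Section~\ref{GRS:section}: fixed coordinate vectors of the $r+3$ points generate one MDS $[r+3,r+1]$ code, which two curves through the points would both exhibit as $\mathrm{GRS}_{r+1}$ codes, and Proposition~\ref{GRS:Automorphism} with Corollary~\ref{GRS:Automorphism-cor} would then identify their parametrizations and hence the curves. I would keep the elementary interpolation argument as the primary proof and mention this code-theoretic route only as a remark.
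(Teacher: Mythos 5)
Your argument is correct, but it is worth noting that the paper does not prove Proposition~\ref{prop-RNC} at all: its ``proof'' is a list of references to the classical literature (Veronese, Castelnuovo, Bordiga, and \cite[p.~530]{griffiths:1978}). What you have written is essentially a self-contained version of the standard argument from those references, transplanted to $\mathbb{P}^r(\mathbb F_q)$: normalize the first $r+2$ points to the frame $e_0,\dots,e_r,\mathbf 1$ by the simply transitive $\mathrm{PGL}(r+1,\mathbb F_q)$-action, read off from general position that the last point $(\lambda_0:\cdots:\lambda_r)$ has distinct nonzero coordinates, and force any rational normal curve through the configuration into the inverse-linear form $t\mapsto(1/(t-t_0):\cdots:1/(t-t_r))$ with $t_i\propto 1/\lambda_i$, which settles existence and uniqueness simultaneously. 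This buys the paper something it currently lacks, namely an actual proof valid over finite fields; your observation that general position forces $q\geq r+2$ (so that $\mathbb{P}^1(\mathbb F_q)$ has enough parameters for $r+3$ distinct preimages) is exactly the point where a blind appeal to the characteristic-zero literature would be sloppy. Two small repairs: perform the reparametrization sending the parameter of $\mathbf 1$ to $\infty$ \emph{before} factoring the $f_m$, so that the parameters of the coordinate points are automatically finite and the factorization $f_m=c_m\prod_{i\neq m}(t-t_i)$ with $c_m\neq 0$ (nondegeneracy) is legitimate on all of $\mathbb{P}^1$; and be aware that your proposed code-theoretic alternative via Proposition~\ref{GRS:Automorphism} is less immediate than you suggest, since identifying two curves through the points requires not just relating the two pairs $(\ba,\bb)$ and $(\bc,\bd)$ by a fractional transformation but also checking that this transformation lifts to a projective transformation of $\mathbb{P}^r$ preserving $\mathcal X_r$; keeping the interpolation argument as the primary proof is the right call.
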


\begin{proof}
See \cite{bordiga:1885,carlini:2007,castelnuovo:1885,piggott:1947,veronese:1882} and \cite[p. 530]{griffiths:1978}.
\end{proof}

\section{Error-correcting pairs}
\label{ECP:section}

We present in this section a general decoding method for linear codes that have an \emph{``Error-Correcting Pair (ECP)''}.
The notion of an error correcting pair for a linear code was introduced independently by Pellikaan \cite{pellikaan:1988,pellikaan:1992} and K\"otter in \cite{koetter:1992}. These articles show that there exists a decoding algorithm that corrects up to $t$ errors with complexity $\mathcal O(n^3)$ for every linear code defined over $\mathbb F_q^n$ that has a $t$-error correcting pair.
Indeed, ECP's provide a unifying point of view for several classical bounded distance decoding for algebraic codes.

\begin{defi}
Let $C$ be an $\mathbb F_q$-linear code of length $n$. The pair $(A,B)$ of $\mathbb F_{q^m}$-linear codes of length $n$ is called a \emph{$t$-error correcting pair} (ECP) for $C$ if the following properties holds:
\begin{multicols}{2}
\begin{enumerate}[E.$1$]
 \item\label{it:E1} $(A*B) \perp C$,
 \item\label{it:E2} $k(A) > t$,
 \item\label{it:E3} $d(B^{\perp}) > t$,
 \item\label{it:E4} $d(A) + d(C) > n$.
 \end{enumerate}
\end{multicols}
\end{defi}

Broadly speaking: given a positive integer $t$, a $t$-ECP for a linear code $C\subseteq F_q^n$ is a pair of linear codes $(A,B)$ satisfying that $A*B \subseteq C^{\perp}$ together with several inequalities relating $t$ and the dimensions and (dual) minimum distances of $A$, $B$ and $C$.
Furthermore note that if the fourth property (E.$\ref{it:E4}$) is replaced by the statements presented below then, again $(A,B)$ is a $t$-ECP for $C$ and the minimum distance of such linear code is at least $2t+1$.
\begin{enumerate}[E.$5$]
 \item\label{it:E5} $d(A^{\perp}) > 1$ or equivalently $A$ is a non-degenerated code,
 \item\label{it:E6} $d(A) + 2t > n$.
\end{enumerate}

\subsection{Existence of ECP for several families of codes}

The existence of ECP for generalized Reed-Solomon and Algebraic codes  was shown in \cite{pellikaan:1996} and for many cyclic codes Duursma and K\"otter in \cite{duursma:1994}
have found ECP which correct beyond the designed BCH capacity.

\begin{ex}
\label{ECP:exampleGRS}
The class of GRS codes belongs to the class of linear codes that have a $t$-ECP. Indeed let $\mathbf a$ be an $n$-tuple of mutually distinct elements of $\mathbb P^1(\mathbb F_q)$ and $\mathbf b$ be an $n$-tuple of nonzero elements of $\mathbb F_q$.
Let $C = \mathrm{GRS}_k(\mathbf a, \mathbf b)$ and let $\mathbf b'$ be such that $C^\perp = \mathrm{GRS}_k(\mathbf a, \mathbf b')$.
Then, the codes
$$\begin{array}{ccc}
A=\mathrm{GRS}_{t+1}(\mathbf a, \mathbf b') & \hbox{ and } &
B=\mathrm{GRS}_t(\mathbf a, \mathbf 1),
\end{array}$$
form a $t$-ECP for the code $C$ with $t=\left\lfloor\frac{n-k}{2} \right\rfloor$.

Conversely, let $\mathbf a$ be an $n$-tuple of mutually distinct elements of $\mathbb P^1(\mathbb F_q)$ and $\mathbf b$ and $\mathbf c$ be $n$-tuples of nonzero elements of $\mathbb F_q$. Define
$$\begin{array}{ccc}
A=\mathrm{GRS}_{t+1}(\mathbf a, \mathbf b) & \hbox{ and }&
B = \mathrm{GRS}_{t}(\mathbf a, \mathbf c)\end{array}$$
Then, $(A,B)$ is a $t$-ECP for $C=\mathrm{GRS}_{2t}(\mathbf a, \mathbf b*\mathbf c)^{\perp}$.
\end{ex}

\begin{ex}
If $D$ is a code that has a $t$-ECP and $C$ is a subcode of the code $D$. Then, $C$ also has a $t$-ECP.
In particular, let $C$ be a subcode of a GRS code, then by Example \ref{ECP:exampleGRS} the GRS code has a $t$-ECP which is also a $t$-ECP for $C$.

Moreover, a Goppa code associated to a Goppa polynomial of degree $r$ can be viewed as an alternant code, that is a subfield subcode of a GRS code of codimension $r$. Thus Goppa codes have also an $\left\lfloor\frac{r}{2}\right\rfloor$-error correcting pair. In the binary case with an associated square free polynomial the Goppa code has an $r$-ECP.
\end{ex}

\begin{ex}
An Algebraic-Geometry code defined on a curve of genus $g$ with designed minimum distance $d^*$ has a $t$-ECP over $\mathbb F_q$ with $t=\left\lfloor \frac{d^*-1-g}{2}\right\rfloor$ by \cite[Theorem 3.3]{pellikaan:1992} and \cite[Theorem 1]{pellikaan:1989}. Furthermore if $e$ is sufficiently large, then there exists a $t$-ECP over $\mathbb F_{q^e}$ with $t=\left\lfloor \frac{d^*-1}{2}\right\rfloor$ by \cite[Proposition 4.2]{pellikaan:1996}.
\end{ex}

\begin{ex}
By \cite[Corollary 3.4]{pellikaan:1996} any pair of MDS codes $(A,B)$ with parameters $[n,t+1,n-t]$ and $[n,t,n-t+1]$, respectively, form a $t$-ECP for $C=(A*B)^{\perp}$ where the dimension of $C$ is almost always equal to $n-t(t+1)$ by \cite[Appendix A]{marquez:2012c}. Moreover, since random codes are MDS when $q$ is sufficiently larger than $n$, taking random codes $A$ and $B$ of length $n$ and dimension $t+1$ and $t$, respectively, gives a $t$-ECP for $C$ defined again as the dual of the product code $A*B$.
\end{ex}

\section{Puncturing, shortening and gluing}
\label{Puncturing:Section}

Before going through the proof of the main theorem we will first have a brief review of classical methods of constructing a shorter code out of a given one. For a fuller treatment we refer the reader to \cite{huffman:2003}.

Let $\mathbf x \in \mathbb F_q^n$ and $J$ be a subset of $\{1, \ldots, n\}$ consisting of $m$ integers $i_1, \ldots, i_m$ with $1\leq i_1< \cdots < i_m \leq n$, we denote by
$$
\mathbf x_{J} = (x_{i_1}, \ldots, x_{i_m}) = \left( x_{j}\mid j\in J\right) \in \mathbb F_q^m
$$
the restriction of $\mathbf x$ to the coordinates indexed by $J$. Let $\overline{J}$ be the relative complement of $J$ in $\{1, \ldots, n\}$.

In what follows $C$ stands for an $[n,k,d]$ code.

The process of deleting columns from a generator matrix of a given linear code $C$, or equivalently, deleting the same set of coordinates in each codeword of $C$ is called \emph{puncturing}. Consider a subset $J\subseteq \{1, \ldots, n\}$ with $m$ elements and suppose $G$ is a generator matrix of $C$, the \emph{punctured code} $C_{J}$ is the linear code generated by the rows of the $k\times (n-m)$ submatrix of $G$ composed of the $\overline{J}$-indexed columns of $G$, in other words $C_J$ is the set of codewords of $C$ restricted to the positions of $\overline{J}$, that is
$C_{J} =\{\mathbf c_{\overline{J}} \mid \mathbf c \in C\}$.

Let $m<d$. If $\bc , \bc' \in C$ and $\bc_{\overline{J}}=\bc'_{\overline{J}}$, then $\bc =\bc'$.
Therefore, if $A$ and $A'$ are subcodes of $C$ such that $A_J=A'_J$, then $A=A'$.

The punctured code $C_{J}$ is an $[n-m, k(C_{J}), d(C_{J})]$ code with
$$\begin{array}{ccc}
d - m \leq d(C_{J}) \leq d & \hbox{ and } &
k - m \leq k(C_{J}) \leq k
\end{array}.$$
Moreover if $m<d$ then $k(C_J) = k$.

Whereas the process of deleting columns from a parity check matrix of $C$ is known as \emph{shortening}. The shortened code $C^J$ is obtained by puncturing at $J$ the set of codewords that have a zero in the $J$-locations, that is
$$
C^{J} = \{\mathbf c_{\overline{J}} \mid \mathbf c \in C \hbox{ and } \mathbf c_{J} = 0\}.
$$

To compute a generator matrix of $C^{\{j\}}$, we first find a generator matrix of the original code that has a unique row in which the $j$-th column is nonzero. Then, we delete that row and the $j$-th column leaving the sought matrix. This process can be generalized to subsets with more than one element. If $J$ consist of $m$ elements then the shortened code $C^{J}$ is an $[n-m, k(C^{J}), d(C^{J})]$ linear code with
$$
\begin{array}{ccc}
k-m \leq k(C^{J}) \leq k & \hbox{ and } &
d \leq d(C^{J})
\end{array}
$$

Let $C^{\{n\}}$ be obtained from $C=GRS_k(\ba,\bb)$ by shortening on the last coordinate.
Then, $C^{\{n\}}=GRS_{k-1}(\ba',\bb'*\bv')$, where $\ba'= (a_1, \ldots, a_{n-1})$,
$\bb'= (b_1, \ldots, b_{n-1})$ and $\bv'$ has entries $a_j-a_n$ for $j=1, \ldots ,n-1$.

Take notice of some properties of these operations such that the shortened code $C^{J}$ is a subcode of the punctured code $C_{J}$, that is $C^{J}\subseteq C_{J}$,
and the fact that shortening a code is dual to puncturing it, to be precise
$$
\begin{array}{ccc}
\left(C_{J}\right)^{\perp} = \left( C^{\perp}\right)^{J} & \hbox{ and } &
\left(C^{J} \right)^{\perp} = \left( C^{\perp}\right)_{J}.
\end{array}
$$

\begin{lemma}
\label{MT:lemma}
Let $C$ be an $[n,k]$ MDS code and $J$ be a subset of $\{1, \ldots, n\}$ with $m$ elements such that $n - m \geq k$. Then, $C_J$ and $C^J$ are MDS codes with parameters
$\begin{array}{ccc}
[n - m, k] & \hbox{ and }&
[n - m, k - m]
\end{array}$,
respectively.
\end{lemma}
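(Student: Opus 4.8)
The plan is to establish the two parameter claims separately, using the general puncturing and shortening bounds already recorded in Section~\ref{Puncturing:Section} together with the self-duality of the MDS property (Theorem statement, items 1 and 2). First I would treat the punctured code $C_J$. Since $C$ is an $[n,k]$ MDS code, its minimum distance is $d = n-k+1$. The hypothesis $n-m \geq k$ gives $m \leq n-k < n-k+1 = d$, so $m < d$. By the bound recalled above, $m < d$ forces $k(C_J) = k$, and the length of $C_J$ is $n-m$. The Singleton bound then gives $d(C_J) \leq (n-m) - k + 1$, while the lower bound $d(C_J) \geq d - m = (n-k+1) - m = (n-m)-k+1$ matches it exactly. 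Hence $d(C_J) = (n-m)-k+1$ and $C_J$ is an $[n-m,k]$ MDS code.

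For the shortened code $C^J$, the cleanest route is to dualize rather than to argue directly, because the general lower bound $d \leq d(C^J)$ and the dimension bounds $k - m \leq k(C^J) \leq k$ recalled above are not by themselves sharp enough to pin down the parameters. Instead I would invoke the duality relation $(C^J)^{\perp} = (C^{\perp})_J$ stated just before the lemma. Since $C$ is MDS, $C^{\perp}$ is an $[n, n-k]$ MDS code by Theorem~\ref{} (item~2). The hypothesis $n - m \geq k$ rewrites as $n - m \geq n - (n-k) = k$; I must check the puncturing hypothesis for $C^{\perp}$, namely $n - m \geq n-k$, i.e. $m \leq k$, which does indeed follow from $n-m \geq k$ only when combined with $k \leq n-k$, so care is needed here. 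Applying the punctured-code result already proved to $C^{\perp}$ gives that $(C^{\perp})_J$ is an $[n-m, n-k]$ MDS code, hence its dual $C^J$ is an $[n-m, (n-m)-(n-k)] = [n-m, k-m]$ MDS code, again by the self-duality of MDS.

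The step I expect to be the main obstacle is reconciling the index bookkeeping for the two dualization hypotheses: the condition $n - m \geq k$ guarantees the puncturing argument applies to $C$, but to push it through $C^{\perp}$ I need the analogous inequality $n - m \geq n-k$, equivalently $m \leq k$. This is \emph{not} implied by $n-m \geq k$ in general, so either the lemma tacitly assumes enough slack (for instance that $C^J$ is required to have nonnegative dimension $k-m \geq 0$, i.e. $m \leq k$), or the shortening case must be argued directly from first principles on a generator matrix rather than by duality. My plan is therefore to first note explicitly that the stated parameters $[n-m, k-m]$ already presuppose $k - m \geq 0$, i.e. $m \leq k$, so this inequality is a standing hypothesis of the conclusion and may be assumed; with $m \leq k$ in hand the dualization goes through cleanly, and the direct approach can be avoided entirely.
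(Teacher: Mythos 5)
Your treatment of the punctured code $C_J$ is essentially the paper's: force $k(C_J)=k$ (the paper does this via ``every $k$ columns of a generator matrix are independent,'' you via $m<d$; both are fine), then squeeze $d(C_J)$ between the lower bound $d-m$ and the Singleton bound. For the shortened code $C^J$ you diverge: the paper does \emph{not} dualize, but combines the lower bounds $k(C^J)\geq k-m$ and $d(C^J)\geq d=n-k+1$ with the Singleton bound $d(C^J)\leq (n-m)-k(C^J)+1$ to get $n-k+1\leq (n-m)-k(C^J)+1\leq (n-m)-(k-m)+1=n-k+1$, which collapses everything to equality. So your assertion that these bounds ``are not by themselves sharp enough'' is mistaken --- they are, once Singleton is thrown in (the paper's displayed Singleton bound has a sign typo, $+k(C^J)$ for $-k(C^J)$, but the intended squeeze is exactly this). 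Your duality route, $(C^J)^\perp=(C^\perp)_J$ with $C^\perp$ an $[n,n-k]$ MDS code, is nonetheless correct and arguably cleaner, and it has the merit of surfacing a real imprecision in the lemma: the conclusion about $C^J$ tacitly requires $m\leq k$, which is not implied by $n-m\geq k$ (if $m>k$ then $C^J=\{0\}$ and the claimed dimension $k-m$ is negative); the paper's direct argument silently relies on the bound $d\leq d(C^J)$, which is only meaningful when $C^J\neq 0$. Since the lemma is only ever applied with $m=2$ and $k\geq 2$, this costs nothing, but you were right to flag it and to make $m\leq k$ an explicit standing assumption for the shortening half.
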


\begin{proof}
Let $C$ be an MDS code. Then, every $k$-tuple of columns of a generator matrix of $C$ are linearly independent, thus $k = k(C_J)$ unless $n - m < k$ which contradicts the hypothesis made. Moreover we have that
$$d - m \leq d(C_J) \leq d.$$
Hence $d(C_J) \leq n(C_J) - k(C_J) + 1 = d - m$, using the Singleton bound. Hence, $C_J$ is an MDS code with the desired parameters.

On the other hand, using the above known bound for the parameters of the shortened code $C^J$ together with the Singleton bound we have that
$$
\begin{array}{c}
k - m \leq k(C^J) \leq k ~~\hbox{ and } \\
n -k + 1 = d \leq d(C^J) \leq n - m + k(C^J) + 1.
\end{array}
$$
There is only one possibility that satisfies the two inequalities and that coincides with the statement of our lemma.
\end{proof}

\begin{prop}[Gluing Property]
\label{puncture}
Let $C$ be an $[n,k]$ MDS code. Let $I$ and $J$ be two disjoint subsets of $\{ 1, \ldots ,n \}$.
Suppose $2 \leq k \leq n-|I\cup J|-2$.
If $C_{I}$ and $C_{J}$ are GRS codes, then $C$ is a GRS code.
\end{prop}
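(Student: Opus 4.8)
The plan is to exploit the projective-system picture developed in the previous subsection together with the uniqueness statement of Proposition~\ref{prop-RNC}. Write $r = k-1$, so that $C$ is a nondegenerate $[n,r+1]$ MDS code, and its associated projective system $\mathcal P_G = (P_1,\dots,P_n)$ is an $n$-arc in ${\Bbb P}^r(\mathbb F_q)$. The hypothesis $2 \le k \le n - |I\cup J| - 2$ guarantees that the punctured lengths stay comfortably above $r+3$, which is exactly the threshold needed to invoke uniqueness of the rational normal curve. The key observation is that puncturing at $I$ (respectively $J$) simply discards the columns indexed by $I$ (respectively $J$) from $G$, hence discards the corresponding points from the projective system; and the statement "$C_I$ is a GRS code" is, via the dictionary of the previous subsection, precisely the statement that the surviving points $\{P_j : j \notin I\}$ lie on a rational normal curve of degree $r$ in ${\Bbb P}^r(\mathbb F_q)$.

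**The core argument.**

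First I would translate both hypotheses into curve statements: since $C_I$ is GRS, there is a rational normal curve $\mathcal X$ passing through all the points $P_j$ with $j \notin I$; since $C_J$ is GRS, there is a rational normal curve $\mathcal Y$ passing through all the points $P_j$ with $j \notin J$. Because $I$ and $J$ are disjoint, the index set $\overline{I \cup J} = \{1,\dots,n\} \setminus (I\cup J)$ is contained in both $\overline{I}$ and $\overline{J}$, so every point $P_j$ with $j \in \overline{I\cup J}$ lies on both $\mathcal X$ and $\mathcal Y$. The plan is to show that $\mathcal X = \mathcal Y$. For this I count: the number of common points is $|\overline{I\cup J}| = n - |I\cup J|$, and the MDS (arc) property ensures these points are in general position on either curve. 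By hypothesis $n - |I\cup J| \ge k + 2 = r + 3$, so we have at least $r+3$ points in general position lying on both rational normal curves. By Proposition~\ref{prop-RNC} the rational normal curve through $r+3$ general-position points is unique, forcing $\mathcal X = \mathcal Y =: \mathcal Z$.

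**Closing the loop.**

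Once $\mathcal X = \mathcal Y = \mathcal Z$, every point of the system lies on $\mathcal Z$: the points indexed by $\overline{I}$ lie on $\mathcal X = \mathcal Z$, and the points indexed by $\overline{J}$ lie on $\mathcal Y = \mathcal Z$, and since $\overline{I} \cup \overline{J} = \overline{I\cap J} = \{1,\dots,n\}$ (as $I\cap J = \emptyset$), together they account for all $n$ points. Thus the full projective system $\mathcal P_G$ lies on a single rational normal curve of degree $r$ in ${\Bbb P}^r(\mathbb F_q)$, and by the converse direction of the curve--code dictionary this says precisely that $C$ is a GRS code. I should remark that "lying on a rational normal curve" recovers a GRS \emph{structure} up to the choice of column multipliers, but the $n$-arc correspondence already carries all the data needed, so no further reconstruction of $(\mathbf a, \mathbf b)$ is required for the conclusion as stated.

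**Anticipated obstacle.**

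The main delicacy is not the uniqueness step itself but the careful bookkeeping that the $r+3$ common points really are in general position \emph{as points of the ambient} ${\Bbb P}^r$, so that Proposition~\ref{prop-RNC} applies verbatim. This is where the MDS/arc hypothesis must be used explicitly: I would verify that no $r+1$ of the common points lie on a hyperplane by appealing to the fact that every $k = r+1$ columns of a generator matrix of an MDS code are linearly independent (Theorem in Section~\ref{MDS:section}, item on $k$-tuples of columns). A secondary point to handle cleanly is the possibility that the defining field must be enlarged: the curves a priori live over some extension $\mathbb F_{q^m}$, but since $C$ itself has a generator matrix over $\mathbb F_q$, Proposition~\ref{pGRSdefoverFq} lets me descend the GRS conclusion back to $\mathbb F_q$, closing the argument in the correct field.
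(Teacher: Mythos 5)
Your argument is correct and coincides, almost step for step, with the paper's own second proof of this proposition, given in the remark immediately following it: translate both hypotheses into rational normal curves $\mathcal X_I$ and $\mathcal X_J$ through the punctured projective systems, observe that the $n-|I\cup J|\geq r+3$ common points are in general position by the MDS/arc property, and invoke Proposition~\ref{prop-RNC} to force $\mathcal X_I=\mathcal X_J$. (The paper's primary proof instead glues the two GRS representations directly, using Lemma~\ref{MT:lemma} and the uniqueness of the pair $(\mathbf a,\mathbf b)$ up to fractional transformations from Proposition~\ref{GRS:Automorphism}, but the geometric route you chose is equally valid and is endorsed by the authors themselves.)
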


\begin{proof}
After a permutation of $\{ 1, \ldots ,n \}$ we may assume without loss of generality  that
$I= \{ 1, \ldots ,i \}$ and $J= \{ n-j+1, \ldots ,n \}$ with $i+j\leq n$, since $I$ and $J$ are disjoint.

By Lemma \ref{MT:lemma}, the code $C_I$ is a GRS code with parameters $[n-i, k]$,
so there exists a vector $\mathbf u=(u_{i+1}, \ldots, u_n)$ of $(n-i)$ mutually distinct elements in $\mathbb F_q$ and a vector $\mathbf v = (v_{i+1}, \ldots, v_n)$ of $(n-i)$ nonzero elements in $\mathbb F_q$ such that $C_I = \mathrm{GRS}_{k}(\mathbf u, \mathbf v)$.

Similarly, by Lemma \ref{MT:lemma}, the code $C_J$ is a GRS code with parameters $[n-j, k]$, so there exists a vector $\mathbf r= (r_1, \ldots, r_{n-j})$ of $(n-j)$ mutually distinct elements in $\mathbb F_q$ and a vector $\mathbf s = (s_1, \ldots, s_{n-j})$ of $(n-j)$ nonzero elements in $\mathbb F_q$ such that
$C_J = \mathrm{GRS}_{k}(\mathbf r, \mathbf s)$.

We next consider the vectors $\mathbf u '=(u_{i+1}, \ldots, u_{n-j})$ and $\mathbf v '= (v_{i+1}, \ldots, v_{n-j})$ of $(n-j-i)$ mutually distinct elements in $\mathbb F_q$; and the vectors $\mathbf r '=(r_{i+1}, \ldots, r_{n-j})$ and
$\mathbf s '= (s_{i+1}, \ldots, s_{n-j})$ of $(n-j-i)$ nonzero elements in $\mathbb F_q$.

Let $K =I \cup J$. Then,
\begin{itemize}
\item $C_K=\left( C_I\right)_J$ is obtained from $C_I$ by puncturing on the $j$ right-most coordinates thus by Lemma \ref{MT:lemma}, $C_K = \mathrm{GRS}_{k}(\mathbf u ', \mathbf v ')$.
\item $C_K=\left( C_J\right)_I$ is also obtained from $C_J$ by puncturing on the $i$ left-most coordinates, that is, by Lemma \ref{MT:lemma}, $C_K = \mathrm{GRS}_{k}(\mathbf r ', \mathbf s')$.
\end{itemize}

Furthermore we have that $2\leq k(C_K) \leq n(C_K)-2$, since
$$\begin{array}{c}
k(C_K) = k, ~~ n(C_K) - 2 = n-|K|-2 \\
\hbox{ and, by assumption, } 2\leq k\leq n-|K|-2.
\end{array}$$
Moreover, by Proposition \ref{GRS:Automorphism}, a representation of $C_K$ as a GRS code is unique up to a fractional map of the projective line that induces an automorphism of the code. Hence we may assume that $\mathbf u ' = \mathbf r '$ and $\mathbf v ' = \mathbf s '$.

Let $\mathbf a =(r_1, \ldots ,r_{n-j},u_{n-j+1}, \ldots ,u_n)$ and $\mathbf b =(s_1, \ldots ,s_{n-j},v_{n-j+1}, \ldots ,v_n)$, or equivalently,
$\mathbf a = (r_1, \ldots ,r_i,u_{i+1}, \ldots ,u_n)$ and $\mathbf b =(s_1, \ldots ,s_i,v_{i+1}, \ldots ,v_n)$.
The points with homogeneous coordinates $(1:a_j:\cdots :a_j^{k-1})$ in the projective space correspond to the projective system associated to the generator matrix $G_{\ba}$ of the code $C$ which is MDS. Hence $\mathbf a$ consists of $n$ mutually distinct elements in $\mathbb P^1(\mathbb F_q)$, $\mathbf b$ is a vector of $n$ nonzero elements in $\mathbb F_q$ and $C=\mathrm{GRS}_{k}(\mathbf a ,\mathbf b)$ is a GRS code.
\end{proof}

\begin{rem}
In the following lines we give a second proof of the above result by means of rational normal curves. 
The associated projective system of the MDS code $C$ is
an $n$-tuple $\mathcal P_G = (P_1,\ldots ,P_n)$ in general position.
\begin{itemize}
\item The code $C_I$ is a GRS code so it corresponds to the projective system $\mathcal P_I = (P_{i+1},\ldots ,P_n)$ that lies on a rational normal curve $\scrx_I$ in $\mathbb P^r$ of degree $r=k-1$.
\item Similarly, the code $C_J$ is a GRS code, so it corresponds to the projective system $\mathcal P_J = (P_1,\ldots ,P_{n-j})$ that lies on a rational normal curve $\scrx_J$ in $\mathbb P^r$ of degree $r$.
\item Let $K=I\cup J$. The code $C_{K}$ corresponds to the projective system $\mathcal P_K=(P_{i+1},\ldots ,P_{n-j})$ that lies both on $\scrx_I$ and $\scrx_J$. 
\end{itemize}
But $k\leq n-|I\cup J|-2= n-(i+j)-2$. So $n-(i+j)\geq r+3$.
Hence, by Proposition \ref{prop-RNC}, $\mathcal P_K = (P_{i+1},\ldots ,P_{n-j})$ determines a unique rational normal curve: $\scrx = \scrx_I=\scrx_J$.
Or equivalently, all the points $(P_1,\ldots ,P_n)$ are on the rational normal curve $\scrx$. Therefore, $C$ is a GRS code.
\end{rem}

The assumption $2 \leq k \leq n-|I\cup J|-2$ in the previous proposition is essential as the following examples will show.

\begin{ex}
Let $\mathcal Q_1$ and $\mathcal Q_2$ be two distinct irreducible conics in the projective plane over $\mathbb F_q$, that is
rational normal curves of degree $2$ in $\mathbb P^2(\mathbb F_q)$.
By Bezout's Theorem, these conics intersect in at most $4$.
Choose the conics in such a way that they intersect in  exactly $4$ points, denoted by $P_3, P_4, P_5 ,P_6$.

Let $P_1, P_2$ two points in $\mathcal Q_1(\mathbb F_q)$ and let
$P_7, P_8$ two points in $\mathcal Q_2(\mathbb F_q)$ such that no three of the points $P_1,P_2,\ldots ,P_8$ are collinear -
this is possible if $q$ is sufficiently large.
Hence, $(P_1,P_2,\ldots ,P_8)$ is an $8$-arc in ${\Bbb P}^2(\mathbb F_q)$. We define $C$ as the corresponding $[8,3,6]$ MDS code.

Let $I=\{7 ,8\} $ and $J=\{1,2 \} $. Then, $C_{I}$ and $C_{J}$
are both GRS codes, since they are $6$-arcs that lie on the conics,
that is rational normal curves of degree $2$, $\mathcal Q_1$ and $\mathcal Q_2$ respectively.
But $C$ is not a GRS code, since $5$ points in general position determine a unique conic that passes through these points.
Note that the condition $2\leq k\leq n- |I\cup J|-2$ is not verified.
\end{ex}

\begin{ex}
Let $q\geq 8$ be a power of two. Let $\ba$ be a $5$-tuple of mutually distinct elements of $\fq $.
Let $C$ be the code with generator matrix
$$
G=
\left(
\begin{array}{ccccccc}
 1   & 1   & 1   & 1   & 1   &0&0 \\
a_1  &a_2  &a_3  &a_4  &a_5  &0&1 \\
a_1^2&a_2^2&a_3^2&a_4^2&a_5^2&1&0 \\
\end{array}
\right)
$$
Consider the columns of $G$ as points in the projective plane with homogeneous coordinates $x_0$, $x_1$ and $x_2$
corresponding to the first, second and third row, respectively.
Then, the $6$ points corresponding to the first $6$ columns lie on a unique
conic with equation $x_0x_2=x_1^2$. The last column is the nucleus of this conic, that is the
unique point through which all the tangent lines of the conic pass.
Hence $C$ is an MDS code with parameters $[7,3,5]$. But $C$ is not a GRS code, since the nucleus does not lie on the conic.

Let $I=\{ 6,7\}$ and $J=\{1,2\}$. Then, $C_I$ and $C_J$ are both GRS codes since through any $5$ points in $\mathbb P^2(\mathbb F_q)$ in general position there passes a unique rational normal curve (a conic), see Proposition \ref{prop-RNC}.
So we have a second example where $C_I$ and $C_J$ are both GRS codes but they do not \emph{``glue together''} to form a GRS code.
\end{ex}

\begin{ex}
This example comes from a complete $9$-arc in projective space of dimension 4 over $\ff_9$  \cite{glynn:1986},
or equivalently a $[9,4,6]$ code over $\ff_9$. Let $C$ be the dual code with parameters  $[9,5,5]$ given by the parity check matrix
\cite[p. 49]{glynn:1986}
$$
H=
\left(
\begin{array}{ccccccccc}
1 & 0 & 0 & 0 & 1 & 1        & 1        & 1        & 1  \\
0 & 1 & 0 & 0 & 1 & \sigma^5 & \sigma^4 & \sigma   & \sigma^2   \\
0 & 0 & 1 & 0 & 1 & \sigma^4 & \sigma   & \sigma^3 & \sigma^5   \\
0 & 0 & 0 & 1 & 1 & \sigma   & \sigma^7 & \sigma^6 & \sigma^4   \\
\end{array}
\right),
$$
where $\ff_9$ is obtained from $\ff_3$ by adjoining $\sigma $ such that $\sigma^2=\sigma +1$.
Now $C$ is not a GRS code, since its corresponding $9$-arc is complete, that is it cannot be extended to a $10$-arc, whereas
any GRS code over $\ff_9$ can be extended to an GRS code of length $10$.
The code $C$ has no $2$-ECP over an extension of $\ff_9$ by \cite[Theorem 6.5]{pellikaan:1996} and Proposition \ref{t=2}.
\end{ex}

\section{Main Theorem}
\label{MT:section}
It was proved in \cite[Corollary5.2]{pellikaan:1996} that every linear code over $\mathbb F_q$ is contained in an MDS code of the same minimum distance over some finite extension of $\mathbb F_q$. Therefore describing  linear codes that have an error correcting-pairs is equivalent to characterizing  those MDS codes that have an ECP.
The result presented in \cite{pellikaan:1996} shows that if  $t=1$ then every MDS code has a $t$ error correcting pair. Furthermore an $[n,n-4,5]$ code $C$ has a $2$-ECP if and only if $C$ is a GRS code \cite[Theorem 6.5]{pellikaan:1996}.

\begin{prop}
\label{t=2}
Let $C$ be an $[n,n-4,5]$ code over $\mathbb F_q$ that has $(A,B)$ as a $2$-ECP  over a finite extension of $\mathbb F_q$.
Then, $C$ is a GRS code defined over $\fq$. Moreover the codes $A,B$ are GRS codes with the same evaluation-point sequence as $C$ and
$B=(A*C)^\perp$.
\end{prop}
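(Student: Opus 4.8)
The plan is to reduce the statement to the $t=2$ case analysis already available in \cite{pellikaan:1996}, using the structural constraints imposed by a $2$-ECP on an MDS code of minimum distance $5$. Since $C$ is an $[n,n-4,5]$ MDS code with a $2$-ECP $(A,B)$, the defining properties E.\ref{it:E1}--E.\ref{it:E4} force tight dimension and distance bounds on $A$ and $B$. First I would extract from E.\ref{it:E2} that $k(A)>2$, i.e.\ $k(A)\ge 3$, and from E.\ref{it:E3} that $d(B^\perp)>2$. The inequality E.\ref{it:E4} reads $d(A)+5>n$, so $d(A)>n-5$, giving $d(A)\ge n-4$. Combined with the Singleton bound $d(A)\le n-k(A)+1$, this yields $k(A)\le 3$, hence $k(A)=3$ and $d(A)=n-2$, so $A$ is an $[n,3,n-2]$ MDS code. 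The hard part of the bookkeeping is pinning down $B$: from $(A*B)\perp C$, i.e.\ $A*B\subseteq C^\perp$, together with $\dim C^\perp = 4$, and from the requirement that the product be large enough to force equality, I expect to conclude that $B$ is an $[n,2,n-1]$ MDS code and that in fact $B=(A*C)^\perp$.

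Once $A$ and $B$ are identified as MDS codes of dimensions $3$ and $2$, the next step is to invoke the already-cited result \cite[Theorem 6.5]{pellikaan:1996}, which states precisely that an $[n,n-4,5]$ code has a $2$-ECP if and only if it is a GRS code. This is stated in the paragraph immediately preceding the proposition, so the existence of the $2$-ECP $(A,B)$ directly delivers that $C$ is a GRS code. The work that remains is therefore not to reprove this equivalence but to upgrade the conclusion: to show the GRS structure descends to $\fq$ and to identify $A$ and $B$ explicitly as GRS codes sharing the evaluation-point sequence of $C$.

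For the field-of-definition claim, I would argue that the $2$-ECP may a priori live over an extension $\fqm$, so the result \cite[Theorem 6.5]{pellikaan:1996} gives $C\otimes\fqm$ as a GRS code defined over $\fqm$. But $C$ is given over $\fq$ and thus has a generator matrix with entries in $\fq$; since $2\le n-4\le n-2$, Proposition \ref{pGRSdefoverFq} applies and forces $C$ to be a GRS code defined over $\fq$ itself. This is the clean way to handle the descent without re-examining the extension-field construction.

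Finally, to identify $A$ and $B$, I would use the explicit GRS decoding structure from Example \ref{ECP:exampleGRS}. Writing $C=\mathrm{GRS}_{n-4}(\ba,\bb)$, its dual is $\mathrm{GRS}_4(\ba,\bb')$, and the canonical $2$-ECP for a GRS code uses $A=\mathrm{GRS}_{3}(\ba,\bb')$ and $B=\mathrm{GRS}_{2}(\ba,\mathbf 1)$, both built on the same evaluation-point sequence $\ba$. The relation $B=(A*C)^\perp$ should then follow from the GRS product formula $\mathrm{GRS}_{k}(\ba,\bu)*\mathrm{GRS}_{l}(\ba,\bv)=\mathrm{GRS}_{k+l-1}(\ba,\bu*\bv)$ together with the duality $\mathrm{GRS}_j(\ba,\cdot)^\perp=\mathrm{GRS}_{n-j}(\ba,\cdot)$; here $A*C$ has dimension $3+(n-4)-1=n-2$, so $(A*C)^\perp$ has dimension $2$ and shares $\ba$, matching $B$. \textbf{The main obstacle} I anticipate is the uniqueness step: showing that \emph{any} $2$-ECP, not merely the canonical one, must have $A$ and $B$ of exactly these parameters and must share the evaluation-point sequence of $C$. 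This requires combining the MDS parameter rigidity derived above with Corollary \ref{GRS:Automorphism-cor}, which guarantees that once two GRS codes agree at three positions their evaluation sequences coincide up to a fractional transformation, thereby aligning $A$, $B$ and $C$ on a common $\ba$.
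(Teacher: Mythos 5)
There is a genuine gap, and it sits exactly where you flag your ``main obstacle'': the identification of $A$ and $B$. The tools you propose for that step cannot do the job. Example \ref{ECP:exampleGRS} only exhibits one \emph{canonical} $2$-ECP for a GRS code, and Corollary \ref{GRS:Automorphism-cor} compares two representations of a code \emph{already known} to be GRS; neither says anything about an arbitrary pair $(A,B)$ satisfying E.1--E.4. Knowing that $A$ is an $[n,3,n-2]$ MDS code does not make it GRS (the paper's own $[7,3,5]$ conic-plus-nucleus example is an MDS code of exactly these parameters that is not GRS), so the heart of the proposition --- that the ECP conditions force $A$ and $B$ to be GRS \emph{on the same evaluation-point sequence as} $C$ --- is left unproved. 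The paper's argument gets this by a concrete computation: after field extensions it extracts a $2$-dimensional subcode $B_0\subseteq B$, normalizes it to $\mathrm{GRS}_2(\bb,\mathbf 1)$, and then uses the two orthogonality relations $A\subseteq (C^\perp)\otimes\fqm$ (from $\mathbf 1\in B_0$) and $A*\bb\subseteq (C^\perp)\otimes\fqm$ to solve explicitly for the entries of the systematic generator matrices, exhibiting both $(C^\perp)\otimes\fqm$ and $A$ as generalized Cauchy codes $C_4(\bb,\bc)$ and $C_3(\bb,\bd)$ with the common sequence $\bb$; only then does $B_0\subseteq B\subseteq (A*C)^\perp$ with both extremes of dimension $2$ give $B=B_0=(A*C)^\perp$. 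Some computation of this kind (or the Schur-product machinery of Section \ref{Second:Section}) is indispensable; your plan contains no substitute for it.

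Two smaller points. First, your parameter bookkeeping for $A$ is off: $d(A)\geq n-4$ together with Singleton gives $k(A)\leq 5$, not $k(A)\leq 3$; the parameters $[n,3,n-2]$ do hold, but by \cite[Proposition 2.5]{pellikaan:1996}, not by the arithmetic you wrote. Second, the axioms give only $\dim B\geq 2$ (from $d(B^\perp)>2$ and Singleton), so asserting up front that $B$ is $[n,2,n-1]$ is unjustified; in the paper this equality is the \emph{conclusion} of the last step, not an input. Your use of \cite[Theorem 6.5]{pellikaan:1996} for ``$C$ is GRS'' and of Proposition \ref{pGRSdefoverFq} for the descent to $\fq$ is fine and matches the paper.
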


\begin{proof}That $C$ is a GRS code was proved in \cite[Theorem 6.5]{pellikaan:1996} as mentioned before.
For completeness sake, we include another proof together with the fact that $A$ and $B$ are GRS codes with the same evaluation-point sequence.

Let $(A,B)$ be a $2$-ECP over $\fqm$ for $C$.
So $A$ has parameters $[n,3,n-2]$, by \cite[Proposition 2.5]{pellikaan:1996}, and $d(B^\perp)\geq 3$.
After a possible extension of the field $\fqm$ there exists a subcode $B_0$ of $B$ with parameters $[n,2,n-1]$ such that $(A,B_0)$ is a $2$-ECP for $C$ by \cite[Corollary 5.4]{pellikaan:1996}.
And, after another field extension we may assume that $B_0$ has a codeword of weight $n$ and that it is the all-ones vector. 
Moreover, $B_0$ is an MDS code so $B_0$ has a generator matrix $G_{B} = \left(\begin{array}{c|c} I_2 & B \end{array}\right)$, where $B$ is a $2\times (n-2)$ matrix with entries $b_{ij}$ in $\mathbb F_{q^m}$. Let $\mathbf x = \left( \begin{array}{ccccc}1 & 1 & b_{13} & \ldots & b_{1n}\end{array}\right)$, by replacing $B_0$ by $\mathbf x^{-1} * B_0$ we may assume that $b_{1j} = 1$ for all $3\leq j \leq n$. Thus, $B_0$ is generated by $\left( \begin{array}{ccccc} 1 & 0 & 1 & \ldots & 1\end{array}\right)$ and $\mathbf b = \left(\begin{array}{ccccc} 0 & 1 & b_{23} & \ldots & b_{2n}\end{array}\right)$. Take notice that the entries of $\mathbf b$ are mutually distinct since $B_0$ is MDS, or equivalently, since all determinants of $2\times 2$ minors of $B$ are nonzero.
Clearly $B_0$ is the code $\mathrm{GRS}_2(\bb, \mathbf 1)$.

Let $H_C$ be the parity check matrix of $C$ in reduced echelon normal form.
Then, $H=\left(\begin{array}{c|c}I_4&P\end{array}\right)$ where $P$ is a $4\times (n-4)$ matrix with entries $p_{i,j}$ with $1\leq i \leq 4$ and $1\leq j \leq n-4$.
Let $\mathbf x = \left(\begin{array}{ccccccc}0 & 0 & 0 & 1 & p_{4,1} & \ldots & p_{4,n-4}\end{array}\right)$, by replacing $C$ by $x*C$ we may assume that the $4$-th row of $P$ is the all-ones vector.
Note that $G_C = \left(\begin{array}{c|c} -P^T & I_{n-4}\end{array}\right)$ is a generator matrix of $C$.

Let $G_A$ be a generator matrix of $A$ in reduced echelon form:
$$G_A = \left(\begin{array}{c|c|c} I_3 & \begin{array}{c}r_1 \\ r_2 \\ r_3 \end{array} & S \end{array}\right)$$
where $S$ is a $3\times (n-4)$ matrix with entries $s_{i,j}$ with $1\leq i \leq 3$ and $1\leq j \leq n-4$.

Then, $A\subseteq (C^\perp)\otimes \fqm$, since $\mathbf 1 \in B$ and $(A*B) \perp C$.
Since $G_C$ is a parity check matrix of $(C^\perp)\otimes \fqm$. Then, 
the inner product of any row of the matrix $G_A$ and any column of the matrix $G_C^T$ is zero.
Hence,
$$
s_{ij}=p_{ij}+r_i \ \mbox{ for all }\ i=1,2,3 \ \mbox{ and } \ 1 \leq j \leq n-4.
$$
Also $A* \bb \subseteq (C^\perp)\otimes \fqm$, since $\bb \in B$ and $(A*B) \perp C$.
So
$$
s_{ij}b_{j+4}=b_ip_{ij}+r_ib_4 \ \mbox{ for all } \ i=1,2,3 \ \mbox{ and } \ 1 \leq j \leq n-4.
$$
Therefore $(p_{ij}+r_i)b_{j+4}=b_ip_{ij}+r_ib_4$.  Thus,
$$
p_{ij} = \frac{r_i(b_4-b_{j+4})}{b_{j+4}-b_i} \ \mbox{ for all } \ i=1,2,3 \ \mbox{ and } \ 1 \leq j \leq n-4.
$$
Let $\bc$ be the $n$-tuple with entries $c_i=r_i^{-1}$ for $i=1,2,3$, $c_4=-1$ and
$c_{j+4}=b_4-b_{j+4}$ for $1 \leq j \leq n-4$.
Then,
$$
p_{ij} = \frac{c_{j+4}}{c_i[b_{j+4},b_i]} \ \mbox{ for all } \ i=1,2,3,4 \ \mbox{ and } \ 1 \leq j \leq n-4.
$$
Hence $(C^\perp)\otimes \fqm$ is the generalized Cauchy code $C_4(\bb,\bc)$.
So $C$ is a GRS code defined over $\fqm$,  but also over $\fq $ by Proposition \ref{pGRSdefoverFq}.

We have already seen that $B_0 = \mathrm{GRS}_2(\mathbf b, \mathbf 1)$.
Let $\mathbf d$ be the $n$-tuple with entries $d_i = r_i^{-1}$ for $i=1, 2,3$ and $d_{i+3} = 1$ for $1\leq i \leq n-3$. Then, $A$ has a generator matrix of the form $\left( \begin{array}{c|c} I_3 & Q\end{array}\right)$ with 
$$q_{ij} = \frac{d_{j+3}}{d_i[b_{j+3},b_i]} \ \mbox{ for all } \ i=1,2,3 \ \mbox{ and } \ 1 \leq j \leq n-3.$$

Hence $A$ is the generalized Cauchy code $C_3(\bb,\bd)$, so also the code $A$ is a GRS code with the same same evaluation-point sequence $\bb$.

Therefore we actually have that $A=GRS_3(\bb,\be)$ and $C=GRS_{n-4}(\bb,\mathbf f)$ for some nonzero $n$-tuples $\be$ and $\mathbf f$.
Hence $A*C=GRS_{n-2}(\bb, \be*\mathbf f)$.
Recall that $(A*B)\perp C$, so $(A*C)\perp B$. Hence $B_0\subseteq B\subseteq (A*C)^\perp$,
whereas $B_0$ and $(A*C)^\perp$ both have dimension $2$.
Therefore, $B_0=B=(A*C)^\perp$ and $B$ is the code $GRS_2(\bb, \mathbf 1)$
with the same evaluation-point sequence $\bb$ as $A$ and $C$.
\end{proof}

In the following theorem we generalize this result by proving that every MDS code with minimum distance $2t+1$ having a $t$-ECP is a generalized Reed-Solomon code. Hence if there exists linear codes not admitting an error correcting pair then they belongs to the family of MDS codes and its subcodes excluding the class of GRS.

\begin{thm}\label{mainthm}Let $t$ be an integer such that $1\leq t \leq n/2-1$.

Let $C$ be an $[n, n-2t, 2t+1]$ code over $\mathbb F_q$ that has $(A,B)$ as a $t$-ECP over a finite extension of $\fq$.
Then, $C$ is a GRS code defined over $\fq$. 
Moreover, $A,B$ are GRS codes with the same evaluation-point sequence  as $C$
and $B=(A*C)^\perp$.
\end{thm}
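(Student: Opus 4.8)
The plan is to prove the statement by induction on $t$, using the Gluing Property (Proposition \ref{puncture}) as the main tool and the cases $t=1$ and $t=2$ (Lemma \ref{t=1} and Proposition \ref{t=2}) as the base. Exactly as in Proposition \ref{t=2}, I would begin by normalising the pair: after enlarging the field to a suitable $\fqm$ and applying \cite[Proposition 2.5 and Corollary 5.4]{pellikaan:1996}, one may assume $A$ is an $[n,t+1,n-t]$ MDS code and that $B$ has been shrunk to an MDS subcode $B_0$ of parameters $[n,t,n-t+1]$ containing $\mathbf 1$, with $(A*B_0)\perp C$ still holding; in particular $A=A*\mathbf 1\subseteq (C^\perp)\otimes\fqm$.

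For the inductive step ($t\ge 3$) I would descend to index $t-1$ by puncturing at two disjoint pairs. Fix $I=\{i_1,i_2\}$ and set $\hat A=\{\mathbf a_{\overline I}\mid \mathbf a\in A,\ a_{i_1}=0\}$, $\hat B=\{\mathbf b_{\overline I}\mid \mathbf b\in B_0,\ b_{i_2}=0\}$ and $\hat C=C_I$, all of length $n-2$. Shortening $A$ at $i_1$ and $B_0$ at $i_2$ annihilates the two cross terms that puncturing $C$ at $I$ would otherwise leave, so $(\hat A*\hat B)\perp\hat C$; by Lemma \ref{MT:lemma} these are MDS codes of parameters $[n-2,t,n-t-1]$, $[n-2,t-1,n-t]$ and $[n-2,n-2t,2t-1]$. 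Then $k(\hat A)=t>t-1$, $d(\hat B^\perp)=t>t-1$ and $d(\hat A)+d(\hat C)=n+t-2>n-2$, so $(\hat A,\hat B)$ is a $(t-1)$-ECP for $\hat C$, and $\hat C$ has exactly the parameters $[\,n-2,(n-2)-2(t-1),2(t-1)+1\,]$ demanded by the induction hypothesis. Hence $C_I$ is GRS; doing the same with a disjoint pair $J$ makes $C_J$ GRS. As $|I\cup J|=4$ and $2\le n-2t\le n-6$ — the left bound being $t\le n/2-1$, the right being $t\ge 3$ — Proposition \ref{puncture} glues these to show $C$ is a GRS code over $\fq$ (cf. Proposition \ref{pGRSdefoverFq}).

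It remains to identify $A$ and $B$. Writing $C=\mathrm{GRS}_{n-2t}(\mathbf a,\mathbf f)$ so that $C^\perp=\mathrm{GRS}_{2t}(\mathbf a,\mathbf b')$, I would now exploit $A\subseteq C^\perp$ together with $A*\beta\subseteq C^\perp$ for each generator $\beta$ of $B_0$. Anchoring the evaluation sequence at $\mathbf a$ and feeding these incidences into the generalized Cauchy description (Proposition \ref{pcauchy}) — precisely as in the closing paragraphs of Proposition \ref{t=2} — exhibits $A$ as a generalized Cauchy code, so $A=\mathrm{GRS}_{t+1}(\mathbf a,\mathbf e)$ and $B_0$ is GRS, both with the same evaluation sequence $\mathbf a$ as $C$; the uniqueness of GRS representations (Corollary \ref{GRS:Automorphism-cor}) is what pins the common sequence. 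Since the star product of two GRS codes with equal evaluation sequence is again GRS, $A*C=\mathrm{GRS}_{n-t}(\mathbf a,\mathbf e*\mathbf f)$ has dimension $n-t$, whence $\dim (A*C)^\perp=t$. Finally $(A*B)\perp C$ gives $B\subseteq (A*C)^\perp$, while $B_0\subseteq B$ forces $t=\dim B_0\le\dim B\le t$; therefore $B=B_0=(A*C)^\perp$, completing the statement.

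I expect the difficulty to sit in two places. The lesser one is the book-keeping of the descent: among the ways of shortening $A$ and $B_0$ at the two punctured coordinates, shortening each exactly once is the unique balanced choice that keeps both $k(\hat A)$ and $d(\hat B^\perp)$ strictly above $t-1$, so that all four ECP axioms survive. The genuine obstacle is the final identification: the inclusion $A\subseteq C^\perp$ and the MDS property alone do not force $A$ to be GRS with the evaluation sequence of $C$ — one must use the full strength of the relations $A*\beta\subseteq C^\perp$ — which is the general-$t$ analogue of the two explicit equations $s_{ij}=p_{ij}+r_i$ and $s_{ij}b_{j+4}=b_ip_{ij}+r_ib_4$ that drive the $t=2$ case.
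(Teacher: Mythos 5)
Your reduction step and your use of the Gluing Property reproduce the paper's argument almost verbatim: shortening $A$ at one of the two punctured positions and $B$ at the other is exactly the ``balanced'' choice the paper makes (there with $I=\{n-1,n\}$), the parameter counts $[n-2,t,n-t-1]$, $[n-2,t-1,n-t]$, $[n-2,n-2t,2t-1]$ are right, the verification that all ECP axioms survive is right, and the constraint $t\geq 3$ is precisely what makes $2\leq k\leq n-|I\cup J|-2$ hold in Proposition \ref{puncture}. Up to the conclusion that $C$ is a GRS code over $\fq$, your proposal is correct and is the paper's proof.

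The gap is in the step you yourself flag as ``the genuine obstacle'': the identification of $A$ and $B$. You propose to redo the generalized Cauchy computation of Proposition \ref{t=2} ``precisely as in the closing paragraphs'', but that computation does not generalize as stated. For $t=2$ it works because $\dim C^\perp-\dim A=1$, so the reduced echelon form of $G_A$ is $\left(\begin{array}{c|c|c}I_3 & \br & S\end{array}\right)$ with a \emph{single} extra column $\br$, and because $B_0$ has exactly two generators in a normalized form; the relations $s_{ij}=p_{ij}+r_i$ and $s_{ij}b_{j+4}=b_ip_{ij}+r_ib_4$ then solve uniquely for $p_{ij}$ in Cauchy form. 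For general $t$ the echelon form of $G_A$ is $\left(\begin{array}{c|c|c}I_{t+1} & R & S\end{array}\right)$ with $R$ a $(t+1)\times(t-1)$ block, the incidence relations become $s_{ij}=p_{ij}+\sum_{l}r_{il}p_{t+1+l,j}$ together with $t$ further families coming from the generators of $B_0$, and it is not clear (and you do not show) that these resolve into the Cauchy form. The paper avoids this entirely by exploiting the \emph{full} induction hypothesis, which you state but never use at this point: $A_1$ and $B_1$ are already known to be GRS with the same evaluation-point sequence $\br''$ as $C_1$. The paper then lifts this information: since puncturing two positions is injective on codes of minimum distance greater than two, $A'=\{\bx\in A\mid x_{n-1}=0\}$ is forced to equal $\bu*\mathrm{GRS}_t(\br,\bs)$ inside $(C^\perp)'$, so $A=\langle\bx\rangle+\bu*\mathrm{GRS}_t(\br,\bs)$ for one extra generator $\bx$, and the relation $A_1*B_1\subseteq C_1^\perp$ pins down $\bx=\lambda\bs$, giving $A=\mathrm{GRS}_{t+1}(\br,\bs)$ and similarly $B=\mathrm{GRS}_t(\br,\mathbf 1)$. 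Your closing argument ($A*C=\mathrm{GRS}_{n-t}$, hence $B=B_0=(A*C)^\perp$ by a dimension count) is fine once $A$ is identified, but as written the identification itself is asserted rather than proved, and the route you point to is not the one that works.
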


\begin{proof}
By the assumptions we have that $C$ has dimension $k=n-2t$ and $1\leq t \leq n/2-1$. Hence $2\leq k \leq n-2$.
We proceed by induction on $t$.
The case $t=2$ was proved in \cite[Theorem 6.5]{pellikaan:1996} and Proposition \ref{t=2}.
For the inductive step, let $t>2$ and assume that the theorem holds for all $t'< t$.

Now suppose that $C$ is a code with parameters $[n,n-2t,2t+1]$ that has $(A,B)$ as $t$-ECP.
$A$ has parameters $[n,t+1,n-t]$ by \cite[Proposition 2.5]{pellikaan:1996}.

Let $C_1$ be the code obtained from $C$ by puncturing on the two right-most coordinates, since $n-2 \geq k$, then $C_1$ is a MDS code with parameters $[n-2,n-2t,2t-1]$ by Lemma \ref{MT:lemma}.

Let $A_1$ be the code obtained from $A$ by puncturing on the right-most coordinate and then shortening this punctured code on the right, that is:
$$
A_1 = \left\{ \mathbf a' = (a_1, \ldots, a_{n-2})\in \mathbb F_q^{n-2} \mid (\mathbf a', 0 , a_n)\in A \hbox{ for some } a_n \in \mathbb F_q\right\}.
$$
First we deduce that the code $A_{\{n\}}$ which is obtained from $A$ by puncturing on the right-most coordinate is a MDS code with parameters $[n-1,t+1,n-t-1]$, by Lemma \ref{MT:lemma}. Therefore, again using Lemma \ref{MT:lemma}, $A_1$ is a MDS code with parameters $[n-2,t,n-t-1]$.

Let $B_1$ be the code obtained from $B$ by shortening on the right-most coordinate and then puncturing this shortened code on the right that is
$$
B_1 = \left\{ \mathbf b' = (b_1, \ldots, b_{n-2}) \in \mathbb F_q^{n-2} \mid (\mathbf b', b_{n-1}, 0)\in B \hbox{ for some } b_{n-1}\in \mathbb F_q\right\}.
$$
Furthermore:
\begin{enumerate}
\item $A* B \perp C$ thus for all $\mathbf a' \in A_1$, $\mathbf b' \in B_1$ and $\mathbf c' \in C_1$ there exists some $a_n, b_{n-1}, c_{n-1}, c_n\in \mathbb F_q$ such that $\mathbf a = (\mathbf a',0,a_n)\in A$, $(\mathbf b', b_{n-1}, 0)\in B$ and $(\mathbf c', c_{n-1}, c_n)\in C$ verifying that
$$(\mathbf a' * \mathbf b') \cdot \mathbf c' = \sum_{i=1}^{n-2} a_i b_i c_i =
\sum_{i=1}^{n-2} a_i b_i c_i + 0 (b_{n-1}c_{n-1}) + 0(a_n c_n) = (\mathbf a * \mathbf b) \cdot \mathbf c = 0.$$
Hence $A_1 * B_1 \perp C_1$.
\item $k(A_1) = t > t-1$.
\item The code $(B_1)^\perp$ is obtained from $B^\perp$ by puncturing on the right-most coordinate and then shortening this punctured code on the right, by Section \ref{Puncturing:Section}.
Hence, $d((B_1)^\perp)> t-1$, since $d(B^\perp)> t$, and by puncturing the minimum distance drops at most by one, and by shortening the minimum distance remains the same.
\end{enumerate}
Thus $(A_1, B_1)$ is a $(t-1)$-ECP for $C_1$ which is a MDS code with $d(C_1) = 2(t-1) +1$. By the induction hypothesis we have that $C_1$ is a GRS code defined over $\fq$ and $A_1,B_1$ are GRS codes with the same evaluation-point sequence as $C_1$.

Following the same arguments we deduce that the code $C_2$ obtained from $C$ by puncturing on the two left-most coordinates is a GRS code defined over $\fq$.

Let Let $I=\{n-1,n \}$ and $J= \{1,2 \}$. Then, $C_1=C_I$ and $C_2=C_J$.
Hence $C$ is a GRS code by Proposition \ref{puncture}.
Furthermore $C$ is an $\fq$-linear code. Hence  $C$ is a GRS code defined over $\fq $ by Proposition \ref{pGRSdefoverFq}. 
More precisely, $C^\perp=GRS_{2t}(\br,\bs)$ with evaluation-point sequence $\br$ and some vector $\bs$ with nonzero entries.

We now show that $A$ and $B$ are GRS codes defined with the same evaluation-point sequence as $C$.

Without loss of generality we may assume that $\mathbf 1 \in B$. 
So $A \subseteq C^\perp$.
For any $\fqm$-linear code $V$ we define $V'= \{ \bx \in V | x_{n-1}=0\}$.
Then, 
$$A' \subseteq (C^\perp)' = \bu* \mathrm{GRS}_{2t-1}(\br,\bs) \mbox{ with } \bu = \br - r_{n-1}\mathbf 1.$$
Let $\bx" =(x_1,\ldots ,x_{n-2})$ for any vector $\bx \in \fqm^n$.
Then,
$$
A_1=(A')_I \subseteq (\bu* \mathrm{GRS}_{2t-1}(\br,\bs))_I =\mathrm{GRS}_{2t-1}(\br",\bs"*\bu").
$$
Recall that, by the Induction Hypothesis, $A_1$ is a GRS code of dimension $t$ and with evaluation-point sequence $\br''$. 
Hence $(A')_I = A_1= \mathrm{GRS}_{t}(\br",\bs"*\bu")$. But also, $A_1 = (\bu* \mathrm{GRS}_{t}(\br,\bs))_I$.
So 
$$A' =\bu* \mathrm{GRS}_{t}(\br,\bs)\mbox{, since }|I|=2< n-2t+1 = d(\bu* \mathrm{GRS}_{2t}(\br,\bs))$$ 
The dimensions of $A'$ and $A$ are $t$ and $t+1$, respectively.
Hence, 
$$A = \langle \mathbf x \rangle + \bu* \mathrm{GRS}_{t}(\br,\bs) \mbox{ for some }\bx.$$ 
Without loss of generality we may assume that $\bx $ has nonzero entries.

Now, $B* \bx \subseteq C^\perp$, or equivalently, $B\subseteq \bx^{-1} *C^\perp$.
Similarly as for $A$ one shows that 
$$B= \langle \mathbf 1 \rangle +\bv* \mathrm{GRS}_{t-1}(\br,\bs*\bx^{-1}) \mbox{, where }\bv = \br - r_n\mathbf 1.$$ 

Since $A_1*B_1 \subseteq C_1^{\perp} = \left(\mathrm{GRS}_{2t}(\br, \bs)\right)_I$ we have that:
$$\mathrm{GRS}_{t}(\br'',\bs'')*\mathrm{GRS}_{t-1}(\br'',\bs''*(\bx^{-1})'') = \mathrm{GRS}_{2t-1}(\br'',\bs''*\bs''*(\bx^{-1})'')\subseteq  \left(\mathrm{GRS}_{2t}(\br, \bs)\right)_I
$$
Thus, $\bx''=\lambda \bs''$. Moreover, $\bx, \bs\in \mathrm{GRS}_{2t}(\br, \bs)$, since $A\subseteq C^{\perp}$. So $\bx = \lambda \bs$, which gives that 
$$\begin{array}{ccc}
A=\mathrm{GRS}_{t+1}(\br,\bs)&\mbox{ and }&B=\mathrm{GRS}_{t}(\br,\mathbf 1).
\end{array}$$
Thus, $B=(A*C)^{\perp}$ is proved.
\end{proof}

\begin{cor}\label{unicity}\label{changed}Let $t$ be an integer such that $1\leq t \leq n/2-1$.

Let $C$ be an $[n, n-2t, 2t+1]$ code over $\mathbb F_q$ that has a $t$-error correcting pair
$(A,B)$ over a finite extension of $\mathbb F_q$. Then, any other $t$-error correcting pair
$(A',B')$ over a finite extension of $\mathbb F_q$ is such that $A'=\bx *A$ and $B'=\bx^{-1}*B$
where $\bx $ has nonzero entries in a common  finite extension of $\mathbb F_q$.
\end{cor}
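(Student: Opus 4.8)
The plan is to apply Theorem \ref{mainthm} to \emph{both} error-correcting pairs and then compare the resulting GRS descriptions. First I would pass to a single finite extension $\fqm$ large enough to carry all the data of both pairs; since $(A,B)$ and $(A',B')$ are each defined over some finite extension of $\fq$, a common extension exists. Over $\fqm$ Theorem \ref{mainthm} applied to $(A,B)$ produces an evaluation-point sequence $\br$ and nonzero multipliers with
$$
C=\mathrm{GRS}_{n-2t}(\br,\mathbf f),\quad A=\mathrm{GRS}_{t+1}(\br,\bs),\quad B=(A*C)^\perp,
$$
while applied to $(A',B')$ it produces a second evaluation-point sequence $\br'$ with
$$
C=\mathrm{GRS}_{n-2t}(\br',\mathbf f'),\quad A'=\mathrm{GRS}_{t+1}(\br',\bs'),\quad B'=(A'*C)^\perp.
$$

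The decisive step is to reconcile $\br$ and $\br'$. Both describe the \emph{same} code $C$ in dimension $k=n-2t$, and the hypothesis $1\le t\le n/2-1$ gives $2\le k\le n-2$, so Proposition \ref{GRS:Automorphism} supplies a fractional transformation $\phi(M,\cdot)$ with $\br'=\phi(M,\br)$ together with the attendant rescaling of $\mathbf f'$. Because a fractional transformation is determined by its values at three points (Corollary \ref{GRS:Automorphism-cor}), this $M$ is unique up to scalar. I would then transport $A'$ onto the evaluation-point sequence $\br$ using this same $M$: applying Proposition \ref{GRS:Automorphism} in dimension $t+1$ (here $2\le t+1\le n-2$) yields a nonzero multiplier $\widetilde{\bs}=\theta(M,\br)^{-t}*\bs'$ with $A'=\mathrm{GRS}_{t+1}(\br,\widetilde{\bs})$. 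The only care needed is to check that all column multipliers and all values $\theta(M,r_i)$ remain nonzero, so that $\widetilde{\bs}$ is a legitimate multiplier.

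With $A=\mathrm{GRS}_{t+1}(\br,\bs)$ and $A'=\mathrm{GRS}_{t+1}(\br,\widetilde{\bs})$ now sharing an evaluation-point sequence, I set $\bx=\widetilde{\bs}*\bs^{-1}$, which has nonzero entries, and use $\bx*\mathrm{GRS}_{t+1}(\br,\bs)=\mathrm{GRS}_{t+1}(\br,\bx*\bs)$ to conclude $A'=\bx*A$. For the second component I would invoke $B=(A*C)^\perp$ and $B'=(A'*C)^\perp$ from Theorem \ref{mainthm}: from $A'=\bx*A$ one gets $A'*C=\bx*(A*C)$ at the level of generating sets, and the elementary identity $(\bx*V)^\perp=\bx^{-1}*V^\perp$, which holds by a direct inner-product computation together with a dimension count, then gives
$$
B'=(A'*C)^\perp=\bigl(\bx*(A*C)\bigr)^\perp=\bx^{-1}*(A*C)^\perp=\bx^{-1}*B.
$$
The main obstacle is precisely the reconciliation step: nothing a priori forces the two invocations of Theorem \ref{mainthm} to return the same evaluation-point sequence, and the whole argument rests on Proposition \ref{GRS:Automorphism} to move $A'$ and $B'$ onto the evaluation-point sequence of $A$ and $B$ while preserving the nonvanishing of the multipliers; once that is done the identities for the star product and its dual are routine.
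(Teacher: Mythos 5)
Your proposal is correct and follows essentially the same route as the paper: apply Theorem \ref{mainthm} to both pairs, write $A$ and $A'$ as $\mathrm{GRS}_{t+1}$ codes on a common evaluation-point sequence, take $\bx$ to be the entrywise quotient of the column multipliers, and deduce $B'=\bx^{-1}*B$ from $B=(A*C)^\perp$ and $B'=(A'*C)^\perp$. The one place you go beyond the paper is the reconciliation step via Proposition \ref{GRS:Automorphism}: the paper simply asserts that the two invocations of Theorem \ref{mainthm} return the same evaluation-point sequence, whereas you correctly observe that a priori they need only return sequences related by a fractional transformation and then transport $A'$ onto the sequence of $A$ (with the $\theta$-values automatically nonzero) --- a small gap worth closing, and your fix is sound.
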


\begin{proof}$A$ and $A'$ are GRS codes of dimension $t+1$ with the same
evaluation-point sequence, by Theorem \ref{mainthm}. Hence, 
$$\begin{array}{cccc}
A=\mathrm{GRS}_{t+1}(\ba,\bb) & \mbox{ and  }&
A'=\mathrm{GRS}_{t+1}(\ba,\bb'),&
\mbox{for some }\ba, \bb \mbox{ and }\bb'.
\end{array}$$
Let $\bx$ be the $n$-tuple with entries $x_j=b'_jb_j^{-1}$. Then,
$$A'=GRS_{t+1}(\ba,\bb')=\bx *GRS_{t+1}(\ba,\bb)=\bx *A.$$
Furthermore, $B'=\bx^{-1}*B$, since $B=(A*C)^\perp$ and $B'=(A'*C)^\perp$ by Theorem \ref{mainthm}.
\end{proof}

\section{Second proof}
\label{Second:Section}

A second  independent proof of the main theorem uses a recent result by Randriambololona
\cite{randriambololona:2013} and Mirandola-Z\'emor \cite{mirandola:2015} on the Schur product of codes.

The following proposition is called the Product Singleton Bound.

\begin{prop}\label{pmds1}
Let $(A,B)$ be a pair of $\fq $-linear codes in $\fq ^n$. Then,
$$
d(A*B) \leq \max \{ 1, n - ( \dim A +\dim B) +2 \}.
$$
\end{prop}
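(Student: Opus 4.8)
The plan is to produce a single nonzero codeword of $A*B$ of weight at most $n-\dim A-\dim B+2$ whenever that quantity is positive; the minimum distance is then at most that value. Write $a=\dim A$ and $b=\dim B$. The basic mechanism is shortening, exactly as in Section \ref{Puncturing:Section}. For a subset $J$, let $A^J$ denote $A$ shortened at $J$ and $B_J$ denote $B$ punctured at $J$. If $\mathbf a\in A$ vanishes on $J$, then $\mathbf a*\mathbf b$ vanishes on $J$ for every $\mathbf b\in B$, and its restriction to $\overline J$ equals $\mathbf a_{\overline J}*\mathbf b_{\overline J}$; hence $A^J * B_J \subseteq (A*B)^J$. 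Consequently every nonzero element of $A^J*B_J$ lifts to a codeword of $A*B$ supported inside $\overline J$, of weight at most $n-|J|$.

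To exploit both dimensions at once I would choose two disjoint sets $T_A,T_B\subseteq\{1,\dots,n\}$ with $|T_A|=a-1$ and $|T_B|=b-1$, which is possible exactly when $a+b-2\leq n$, that is, in the range where the right-hand side exceeds $1$. Since $|T_A|<a$, the shortened code $A^{T_A}$ is nonzero, so there is a nonzero $\mathbf a\in A$ with $\mathbf a_{T_A}=0$; likewise a nonzero $\mathbf b\in B$ with $\mathbf b_{T_B}=0$. Then $\mathbf c=\mathbf a*\mathbf b$ vanishes on $T_A\cup T_B$, a set of size $a+b-2$, so $\wt(\mathbf c)\leq n-a-b+2$. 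Provided $\mathbf c\neq 0$, this already yields $d(A*B)\leq n-a-b+2$.

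The \emph{main obstacle} is precisely guaranteeing $\mathbf c\neq 0$, i.e.\ that $\mathbf a$ and $\mathbf b$ can be taken with overlapping supports; this is why the statement is delicate. I would first reduce to the non-degenerate situation by deleting every coordinate on which $A$ or $B$ is identically zero: such a coordinate is identically zero on $A*B$ as well, so deleting it leaves $d(A*B)$, $a$ and $b$ unchanged while decreasing $n$, which only strengthens the target inequality. After this reduction every coordinate lies in $\supp(A)\cap\supp(B)$, and I would secure the overlap by induction on $n$, puncturing a single coordinate to pass to codes of length $n-1$, tracking how $\dim A$, $\dim B$ and $d(A*B)$ change under puncturing (the distance drops by at most one), and invoking the inductive bound. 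The same induction disposes of the dense regime $a+b\geq n+2$, where the asserted bound reads $d(A*B)\leq 1$ and one must exhibit a weight-one word of $A*B$ (or note that $A*B=0$). Finally, the degenerate cases $A=0$ or $B=0$, together with the convention for the minimum distance of the zero code, are handled directly.
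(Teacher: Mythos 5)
The paper does not actually prove this proposition: it is quoted as \cite[Proposition 5]{randriambololona:2013} (the Product Singleton Bound), so you are attempting to supply a proof the authors deliberately outsource. Your reductions are sound as far as they go — deleting coordinates where $A$ or $B$ vanishes identically only strengthens the inequality, and the identity $A^J*B_J\subseteq (A*B)^J$ correctly shows that a nonzero elementary product $\ba *\bb$ vanishing on $\dim A+\dim B-2$ positions would finish the sparse regime. But the proof has a genuine gap exactly where you flag the ``main obstacle'': nothing you write actually produces such a nonzero product. Choosing disjoint $T_A,T_B$ of sizes $\dim A-1$ and $\dim B-1$ and arbitrary nonzero $\ba\in A^{T_A}$, $\bb\in B^{T_B}$ can perfectly well give $\ba *\bb=0$ even for full-support codes (e.g.\ $A=B=\langle(1,1,0,0),(0,0,1,1)\rangle$ over $\ff_2$ with $T_A=\{1\}$, $T_B=\{3\}$), and you give no rule for choosing $T_A,T_B,\ba,\bb$ that avoids this.

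The fallback induction on $n$ does not close the gap either. Puncturing one coordinate costs a unit when lifting back, i.e.\ $d(A*B)\leq d\bigl((A*B)_{\{i\}}\bigr)+1$, and (after discarding the easy case where $A$ or $B$ has a weight-one word, which forces $d(A*B)=1$ by full support) the inductive bound at length $n-1$ only yields $d(A*B)\leq \max\{2,\,n-\dim A-\dim B+2\}$. This settles the range $\dim A+\dim B\leq n$ but reduces everything to the dense regime $\dim A+\dim B\geq n+1$, where the claim becomes: a full-support pair with no weight-one words in $A$ or $B$ still has a weight-one word in $A*B$. That statement \emph{is} the substantive content of the Product Singleton Bound, and your proposal simply asserts that ``one must exhibit a weight-one word of $A*B$'' without doing so; the naive attempt (take a minimum-weight word of $A$ with support $S$ and hope $B$ projects onto all of $\fq^S$) fails because $\dim B_S$ can be smaller than $|S|$. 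Two smaller points: the threshold for the dense regime is $\dim A+\dim B\geq n+1$, not $n+2$; and the degenerate case you should worry about is not only $A=0$ or $B=0$ but $\supp(A)\cap\supp(B)=\emptyset$, where $A*B=0$ and, under this paper's convention $d(\{0\})=n+1$, the stated inequality is literally false — the proposition implicitly assumes $A*B\neq 0$, which holds in the paper's application since $A*B$ is forced to equal $C^\perp$.
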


\begin{proof}\cite[Proposition 5]{randriambololona:2013}
\end{proof}

A pair $(A,B)$ of $\fq $-linear codes in $\fq ^n$ is called a {\em Product MDS} (PMDS) pair if
equality holds in the above proposition.

The stabilizer of an $\fq$-linear  code $C$ in $\fq^n$ is defined by
$$
\mbox{St(C)} = \{ \bx \in \fq^n | \bx * C \subseteq C \} .
$$
The following inequality is called the Kneser bound.
\begin{prop}\label{kneser1}
Let $(A,B)$ be a pair of $\fq $-linear codes in $\fq ^n$. Then,
$$
\dim A*B  \geq  \dim A + \dim B - \dim \mbox{St(A*B)}.
$$
\end{prop}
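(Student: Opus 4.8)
The final statement to prove is the Kneser bound (Proposition~\ref{kneser1}), which asserts that for any pair of $\fq$-linear codes $A,B \subseteq \fq^n$ one has $\dim A*B \geq \dim A + \dim B - \dim \mathrm{St}(A*B)$, where $\mathrm{St}(A*B)$ is the stabilizer of the product code. My plan is to deduce this from the classical Kneser theorem on products of subsets in abelian groups, transported into the multiplicative/coordinatewise setting of the star product. The key observation is that the star product is coordinatewise multiplication, so each coordinate lives in the multiplicative structure of $\fq$, and the whole construction is governed by the monoid structure of $\fq^n$ under $*$. The stabilizer $\mathrm{St}(A*B)$ plays exactly the role of the stabilizer (the subgroup $H$ in Kneser's theorem) of the sumset/productset.

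First I would recall the precise form of the additive Kneser theorem: if $S$ and $T$ are finite nonempty subsets of an abelian group $G$ and $H = \mathrm{Stab}(S+T) = \{g : g + (S+T) = S+T\}$, then $|S+T| \geq |S| + |T| - |H|$. The task is to produce a linear-algebraic analogue where cardinalities are replaced by dimensions of the spanning spaces. The natural bridge is to pass to a monomial/exponential encoding: view an $\fq$-linear code as a space spanned by certain ``characters'' or, more concretely, to replicate Kneser's combinatorial argument at the level of the dimension function $\dim\langle\,\cdot\,\rangle$, which behaves submodularly. Thus I would try to prove the statement directly by mimicking Kneser's induction rather than literally invoking the group-theoretic version, since the objects here are vector spaces and not group subsets.

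The concrete plan is as follows. Set $V = A*B$ and $S = \mathrm{St}(V)$; note $S$ is itself a vector space (indeed a subalgebra of $\fq^n$ under $*$, closed under $*$ and containing the relevant idempotents), and $S*V \subseteq V$ by definition, so $V$ is an $S$-module. I would first verify the submodularity-type inequality
\[
\dim(A*B) + \dim(S) \geq \dim(S*A) + \dim(S*B) \cdot c
\]
in an appropriately corrected form, and then run an induction on $\dim A + \dim B$. The base case is when $A$ or $B$ is already $S$-stable, where the bound reduces to a dimension count inside the module structure. For the inductive step I would follow Kneser's strategy: if $A*B$ is not already stabilized in the strong sense needed, replace $A$ and $B$ by the larger $S$-stable spaces $S*A$ and $S*B$, track how the dimensions change, and use that passing to the stabilizer-saturation cannot increase $\dim A + \dim B - \dim(A*B)$. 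The stabilizer structure guarantees the process terminates.

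The main obstacle I anticipate is the correct formulation and proof of the transfer from Kneser's \emph{cardinality} inequality to the \emph{dimension} inequality; these are not formally identical, because $\dim\langle X * Y\rangle$ can be strictly smaller than a naive count of products, and the submodularity of the dimension function under $*$ is exactly where care is needed. In particular, controlling how $\dim(S*A)$ relates to $\dim A$ when $A$ is not $S$-stable, and ensuring the inductive quantity genuinely decreases, is the delicate point. A clean way to handle this is to exploit that $S$, being the stabilizer, decomposes $\fq^n$ (after a coordinate partition induced by the idempotents of $S$) into blocks on which $S$ acts by scalars, reducing the whole problem to a direct sum of smaller instances plus a genuinely $S$-free part where a transversal argument applies. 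Since Proposition~\ref{kneser1} is quoted here as a tool toward the Product Singleton Bound and the second proof of the main theorem, I would ultimately cite the established Kneser bound for codes and present only the reduction sketch above, flagging the submodularity step as the crux.
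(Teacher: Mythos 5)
The paper offers no proof of this proposition beyond the citation to Mirandola--Z\'emor, Theorem 3.3, and your proposal likewise ends by deferring to that reference, so in substance the two coincide. Your accompanying sketch --- a linear analogue of Kneser's addition theorem proved by induction with stabilizer saturation, using the coordinate partition induced by the idempotents of the stabilizer algebra $\mbox{St}(A*B)$ --- is consistent with the strategy of the cited proof, though the displayed intermediate inequality with the stray factor $c$ is garbled as written and would have to be replaced by a correct submodularity statement if the argument were carried out in full rather than cited.
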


\begin{proof}\cite[Theorem 3.3]{mirandola:2015}.
\end{proof}

\begin{prop}\label{pmds2}
Let $(A,B)$ be a PMDS pair of $\fq $-linear codes in $\fq ^n$.
Further assume that $n> \dim A + \dim B$.
Then, $A$, $B$ and $A*B$ are MDS codes.
\end{prop}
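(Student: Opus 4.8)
The plan is to show that each of $A$, $B$, and $A*B$ meets its Singleton bound, using the PMDS hypothesis together with the Kneser bound (Proposition \ref{kneser1}) to pin down $\dim (A*B)$ exactly. First I would record what the PMDS assumption gives us. Since $(A,B)$ is PMDS and $n > \dim A + \dim B$, the maximum in Proposition \ref{pmds1} is attained by the second term, so $d(A*B) = n - (\dim A + \dim B) + 2$. The strategy is to leverage this large minimum distance for $A*B$, combined with a \emph{lower} bound on $d(A*B)$ derived from the dimensions of $A$ and $B$ individually, to force equality in all three Singleton bounds simultaneously.

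Next I would control $\dim(A*B)$. The Kneser bound says $\dim(A*B) \geq \dim A + \dim B - \dim \mathrm{St}(A*B)$. The key observation to extract is that under the current hypotheses the stabilizer $\mathrm{St}(A*B)$ must be small, ideally one-dimensional (i.e.\ consisting only of scalar multiples of the all-ones vector, whenever $A*B$ is nondegenerate). If $\dim \mathrm{St}(A*B) = 1$, then $\dim(A*B) \geq \dim A + \dim B - 1$. On the other hand, the general dimension bound $\dim(A*B) \leq \dim A + \dim B - 1$ (which holds because the product code is spanned by products of basis elements, and an elementary symmetric-function argument removes the expected overlap) gives the reverse inequality. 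Hence $\dim(A*B) = \dim A + \dim B - 1$ exactly. Feeding this into the already-established value $d(A*B) = n - (\dim A + \dim B) + 2 = n - \dim(A*B) + 1$ shows $A*B$ is MDS.

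Having fixed $\dim(A*B)$, I would then deduce the MDS property of $A$ and $B$ separately. The cleanest route is to argue that if either factor failed to be MDS, then its minimum distance, and hence the minimum distance of the product $A*B$, would be strictly smaller than the PMDS value, contradicting the equality already proved. Concretely, one uses that $d(A*B) \leq d(A)$ when $B$ contains a full-support word (after a suitable scaling we may assume $\mathbf 1 \in B$, using $n > \dim A + \dim B$ to guarantee enough room), so $d(A) \geq d(A*B) = n - \dim(A*B) + 1 \geq n - \dim A + 1$, forcing $d(A) = n - \dim A + 1$; the symmetric argument handles $B$. Thus all three codes are MDS.

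The main obstacle I anticipate is the stabilizer computation, that is, justifying $\dim \mathrm{St}(A*B) = 1$. This is where the hypothesis $n > \dim A + \dim B$ and the nondegeneracy of $A*B$ must be used carefully: a word $\bx$ stabilizing $A*B$ acts by coordinatewise multiplication, and one must rule out nontrivial stabilizers, which amounts to showing $A*B$ is not supported on a proper coset structure that would admit extra symmetries. I expect this to require either invoking the precise statement of Mirandola--Z\'emor on when the Kneser bound is tight, or a direct argument that a stabilizing $\bx$ with two distinct coordinate values would split $A*B$ in a way incompatible with its having the large dimension and minimum distance we derived. Everything else reduces to the elementary Singleton-bound bookkeeping sketched above.
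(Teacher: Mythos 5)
First, a point of reference: the paper does not prove this proposition at all --- its ``proof'' is a citation to \cite[Proposition 5.5]{mirandola:2015}. So your sketch is being measured against the Mirandola--Z\'emor argument, whose skeleton (Kneser bound plus an analysis of the stabilizer of $A*B$) you have correctly identified. But as it stands the proposal has genuine gaps. The central one is the claim $\dim \mathrm{St}(A*B)=1$: this is the entire content of the hard direction, you do not prove it, and you explicitly defer to ``the precise statement of Mirandola--Z\'emor on when the Kneser bound is tight.'' Ruling out a nontrivial stabilizer requires showing that a stabilizer algebra of dimension $s>1$ forces a block decomposition of the support of $A*B$ that is incompatible with the PMDS equality $d(A*B)=n-(\dim A+\dim B)+2$; that is a real argument, not bookkeeping. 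Secondly, your asserted ``general dimension bound $\dim(A*B)\leq \dim A+\dim B-1$'' with an ``elementary symmetric-function argument'' is false for general codes (generically $\dim(A*B)$ is close to $\min(n,\dim A\cdot\dim B)$; the $\dim A+\dim B-1$ bound is a feature of GRS-like codes, which is what one is trying to prove). This particular gap is repairable: in your setting the Singleton bound applied to $A*B$ itself, combined with $d(A*B)=n-(\dim A+\dim B)+2$, gives $\dim(A*B)\leq \dim A+\dim B-1$ directly.

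The final step, deducing that $A$ and $B$ are MDS, contains an inequality that goes the wrong way. From $d(A)\geq d(A*B)=n-\dim(A*B)+1$ you write $n-\dim(A*B)+1\geq n-\dim A+1$, which would require $\dim(A*B)\leq \dim A$; but you have just established $\dim(A*B)=\dim A+\dim B-1\geq \dim A+1$ whenever $\dim B\geq 2$. All this chain yields is $d(A)\geq n-\dim A-\dim B+2$, strictly weaker than the MDS value $n-\dim A+1$. So the MDS property of the two factors does not follow from your argument; Mirandola--Z\'emor obtain it by a separate argument (roughly, a low-weight word $\mathbf a\in A$ produces the subcode $\mathbf a*B\subseteq A*B$ supported on $\mathrm{supp}(\mathbf a)$, and comparing its length, dimension and distance against the PMDS equality yields a contradiction). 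You would need to supply both that argument and the stabilizer computation for the proof to stand.
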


\begin{proof}\cite[Proposition 5.5]{mirandola:2015}.
\end{proof}

\begin{prop}\label{pmds3}
Let $(A,B)$ be a PMDS pair of $\fq $-linear codes in $\fq ^n$.
Further assume that $n> \dim A + \dim B$ and $\dim A, \dim B \geq 2$.
Then, $A$, $B$ and $A*B$ are GRS codes with a common evaluation-point sequence.
\end{prop}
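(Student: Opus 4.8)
The plan is to bootstrap from the previous two propositions (Proposition~\ref{pmds2} gives that $A$, $B$, and $A*B$ are MDS; Proposition~\ref{pmds1} and the PMDS hypothesis fix $\dim(A*B) = \dim A + \dim B - 1$) and then invoke the structure theory of MDS codes together with the main theorem already established. First I would set $a = \dim A$ and $b = \dim B$, and record that $A*B$ is an $[n, a+b-1, n-a-b+2]$ MDS code, with $A$ an $[n,a,n-a+1]$ code and $B$ an $[n,b,n-b+1]$ code. The key observation is that the pair $(A,B)$ is naturally an error-correcting pair for the code $C := (A*B)^{\perp}$: condition~E.\ref{it:E1} holds by definition since $A*B \perp C$, and I would choose $t$ so that the dimension and distance inequalities E.\ref{it:E2}, E.\ref{it:E3}, E.\ref{it:E4} fall out of the MDS parameters just computed.

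The natural choice is to relate $t$ to $a$ and $b$. Since $A$ is MDS of dimension $a$, the smaller-dimensional partner should play the role of the degree-$t$ code, so I would aim for $\min\{a,b\} = t$ and $\max\{a,b\} = t+1$ in the symmetric situation, or more carefully track which of $A,B$ is larger. In general I would set $t = \min\{a,b\}$ and verify: $k(A) = a \geq t+1 > t$ when $A$ is the larger code (E.\ref{it:E2}); $d(B^{\perp}) = b+1 > t$ using that $B$ is MDS so its dual has distance $b+1$ (E.\ref{it:E3}); and $d(A) + 2t = (n-a+1) + 2t > n$, which is exactly the PMDS/dimension condition rearranged, giving E.\ref{it:E6} and hence E.\ref{it:E4} via the substitute conditions E.\ref{it:E5}--E.\ref{it:E6} noted after the ECP definition (non-degeneracy E.\ref{it:E5} follows from $A$ being MDS with $\dim A \geq 2$). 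Once $(A,B)$ is certified as a $t$-ECP for $C = (A*B)^{\perp}$, and $C$ is itself MDS of minimum distance $2t+1$ with $\dim C = n - 2t$, Theorem~\ref{mainthm} applies directly and yields that $C$ is GRS and that $A$ and $B$ are GRS codes sharing a common evaluation-point sequence. Dualizing, $A*B = C^{\perp}$ is then also GRS with that same evaluation-point sequence.

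The main obstacle I anticipate is the bookkeeping to make the parameters line up exactly so that the hypotheses of Theorem~\ref{mainthm} are met on the nose. In particular, Theorem~\ref{mainthm} requires $C$ to have parameters precisely $[n, n-2t, 2t+1]$, so I must confirm that $\dim C = n - \dim(A*B) = n-(a+b-1)$ equals $n-2t$; this forces $a+b-1 = 2t$, i.e.\ $a+b = 2t+1$, which is consistent only if $\{a,b\} = \{t,t+1\}$. Thus the asymmetry $\dim A \neq \dim B$ by exactly one is not incidental but structurally required, and I would need to argue that the PMDS and dimension hypotheses, combined with $\dim A, \dim B \geq 2$, either force this or allow me to reduce to it. If the dimensions differ by more than one the code $C^{\perp} = A*B$ would not have the distance $2t+1$ that the theorem demands, so I expect the genuine content to be checking that the range $1 \leq t \leq n/2 - 1$ hypothesis of Theorem~\ref{mainthm} is satisfied, i.e.\ that $n > \dim A + \dim B$ (given) translates into $n > 2t$, which is immediate. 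The remaining care is simply to match the two ECP conditions E.\ref{it:E4} versus E.\ref{it:E5}--E.\ref{it:E6} correctly, and I would lean on the latter since they are manifestly MDS-parameter conditions.

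Finally I would assemble the conclusion: having shown $(A,B)$ is a $t$-ECP for the MDS code $C=(A*B)^{\perp}$ of distance $2t+1$, Theorem~\ref{mainthm} gives that $A$, $B$, and $C$ are GRS with a common evaluation-point sequence; since $A*B = C^{\perp}$ is the dual of a GRS code it is again GRS on the same sequence, which is exactly the assertion that $A$, $B$, and $A*B$ are GRS with a common evaluation-point sequence.
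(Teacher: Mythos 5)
There is a genuine gap. Your entire strategy is to realize $(A,B)$ as a $t$-ECP for $C=(A*B)^\perp$ and invoke Theorem~\ref{mainthm}, and you correctly observe that this forces $\dim A+\dim B=2t+1$ and, once conditions E.\ref{it:E2} and E.\ref{it:E3} are imposed ($\dim A>t$ and $d(B^\perp)=\dim B+1>t$), in fact forces $\{\dim A,\dim B\}=\{t,t+1\}$. But the proposition makes no such restriction: it covers \emph{any} PMDS pair with $\dim A,\dim B\geq 2$ and $n>\dim A+\dim B$. The hypotheses do not ``force or allow a reduction to'' the case of dimensions differing by one, as you hope. For instance, $A=B=\mathrm{GRS}_k(\ba,\bb)$ with $2\leq k$ and $n>2k$ is a PMDS pair ($A*B=\mathrm{GRS}_{2k-1}(\ba,\bb*\bb)$ has distance $n-2k+2$), and here $\dim A+\dim B=2k$ is even, so $d\bigl((A*B)^\perp\bigr)=2k$ is even and there is no integer $t$ with $2t+1=2k$; Theorem~\ref{mainthm} simply does not apply. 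Nor is there an easy reduction: passing to a codimension-one subcode $B_0\subset B$ need not preserve the PMDS property, since the Product Singleton Bound for $(A,B_0)$ is one larger and $d(A*B_0)\geq d(A*B)$ gives no equality. So your argument proves only the special case $\{\dim A,\dim B\}=\{t,t+1\}$, which is precisely the case the paper already extracts in Theorem~\ref{mainthm2}, not the general statement.

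For comparison, the paper does not prove this proposition at all: it is quoted as Corollary~5.6 of Mirandola--Z\'emor, whose proof is a genuinely different argument (built on the Kneser bound of Proposition~\ref{kneser1} and an analysis of the stabilizer $\mathrm{St}(A*B)$, not on error-correcting pairs). There is also an architectural problem with your route: the paper introduces Proposition~\ref{pmds3} precisely to give a \emph{second, independent} proof of Theorem~\ref{mainthm} in Section~\ref{Second:Section}. Deriving Proposition~\ref{pmds3} from Theorem~\ref{mainthm} would make that second proof circular, so even in the one case where your reduction works it cannot serve the role the proposition plays in this paper.
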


\begin{proof}\cite[Corollary 5.6]{mirandola:2015}
\end{proof}

\begin{thm}[Second Proof for Theorem \ref{mainthm}]
\label{mainthm2}

Let $t$ be an integer such that $2< t < \frac{n}{2}-1$. 
Let $C$ is an $[n, n-2t, 2t+1]$ code over $\mathbb F_q$ that has $(A,B)$ as a $t$-ECP over a finite extension of $\mathbb F_q$.
Then, $C$ is  GRS code defined over $\mathbb F_q$. Moreover, $A$, $B$ are GRS codes with the same evaluation-point sequence as $C$ and $B=(A*C)^{\perp}$.
\end{thm}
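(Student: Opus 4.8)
The plan is to leverage the results on the Schur product (Propositions \ref{pmds1}--\ref{pmds3}) to reprove Theorem \ref{mainthm} for the range $2<t<\frac{n}{2}-1$, without recourse to the induction on $t$ and the gluing argument used in the first proof. The starting observation is that a $t$-ECP $(A,B)$ for an $[n,n-2t,2t+1]$ MDS code $C$ gives, via the defining property E.\ref{it:E1}, the containment $A*B\subseteq C^\perp$. My strategy is to show that after passing to the extension $\fqm$ over which the ECP is defined, $(A,B)$ is forced to be a PMDS pair whose Schur product is precisely $C^\perp$, and then to invoke Proposition \ref{pmds3} to conclude that $A$, $B$, and $A*B=C^\perp$ are all GRS codes sharing a common evaluation-point sequence.

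First I would pin down the dimensions. By \cite[Proposition 2.5]{pellikaan:1996} (as used in the proof of Theorem \ref{mainthm}) the code $A$ has parameters $[n,t+1,n-t]$, and one may replace $B$ by a suitable subcode $B_0$ of dimension $t$ with $\mathbf 1\in B_0$, as in the proof of Proposition \ref{t=2}; this keeps $(A,B_0)$ a $t$-ECP and preserves $A*B_0\subseteq C^\perp$. Now $\dim A+\dim B_0=(t+1)+t=2t+1$, and $C^\perp$ has dimension $2t$. The condition $t<\frac{n}{2}-1$ guarantees $n>2t+1=\dim A+\dim B_0$, so the hypothesis $n>\dim A+\dim B$ of Propositions \ref{pmds2} and \ref{pmds3} is met, and $t>2$ forces $\dim A,\dim B_0\geq 2$ as required by Proposition \ref{pmds3}. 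The Product Singleton Bound (Proposition \ref{pmds1}) gives $d(A*B_0)\leq n-(\dim A+\dim B_0)+2=n-2t+1=d(C^\perp)$, using that $C^\perp$ is MDS of dimension $2t$.

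The crux of the argument is to prove that the pair $(A,B_0)$ is \emph{PMDS}, i.e.\ that equality holds in Proposition \ref{pmds1}, equivalently that $\dim(A*B_0)=\dim A+\dim B_0-1=2t$ and $A*B_0=C^\perp$. The containment $A*B_0\subseteq C^\perp$ already bounds $\dim(A*B_0)\leq 2t$, so it suffices to bound it from below by $2t$. Here I would apply the Kneser bound (Proposition \ref{kneser1}): $\dim(A*B_0)\geq \dim A+\dim B_0-\dim\mathrm{St}(A*B_0)=2t+1-\dim\mathrm{St}(A*B_0)$. Thus the PMDS property follows once we show that the stabilizer $\mathrm{St}(A*B_0)$ is trivial, i.e.\ one-dimensional (spanned by $\mathbf 1$). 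This is the step I expect to be the main obstacle: ruling out a nontrivial stabilizer. The natural way to handle it is to note that if $\mathrm{St}(A*B_0)$ had dimension $\geq 2$, then $A*B_0$ would be stabilized by a code properly containing $\langle\mathbf 1\rangle$, and one argues—using that $A*B_0\subseteq C^\perp$ with $C^\perp$ MDS of dimension $2t$ and length $n$ large relative to $t$—that such a stabilizer is incompatible with $A*B_0$ having codimension at most one in an MDS code of this size; alternatively the codeword of full weight $n$ (obtainable after a further field extension) forces $\mathrm{St}(A*B_0)$ into the diagonal, giving dimension one.

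Once $(A,B_0)$ is shown to be PMDS, Proposition \ref{pmds3} applies directly: $A$, $B_0$, and $A*B_0$ are GRS codes with a common evaluation-point sequence, say $\mathbf a$. Since $\dim(A*B_0)=2t=\dim C^\perp$ and $A*B_0\subseteq C^\perp$, we get $A*B_0=C^\perp$, so $C^\perp$—and hence $C$—is a GRS code with evaluation-point sequence $\mathbf a$. The code $C$ is $\fq$-linear, so Proposition \ref{pGRSdefoverFq} upgrades this to: $C$ is a GRS code defined over $\fq$. Finally, to identify $B$ itself (not merely $B_0$) and establish $B=(A*C)^\perp$, I would argue exactly as in the closing paragraph of the proof of Proposition \ref{t=2}: from $(A*B)\perp C$ we get $(A*C)\perp B$, so $B_0\subseteq B\subseteq (A*C)^\perp$; since $A$ and $C$ are GRS with the common sequence $\mathbf a$, the product $A*C$ is $\mathrm{GRS}_{n-t}(\mathbf a,\cdot)$ and $(A*C)^\perp$ has dimension $t=\dim B_0$, forcing $B_0=B=(A*C)^\perp$. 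This yields that $A$ and $B$ are GRS with the same evaluation-point sequence as $C$, completing the argument.
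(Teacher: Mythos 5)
Your overall skeleton matches the paper's: pass to a subcode $B_0$ of dimension $t$, show the pair is PMDS, and then invoke Propositions \ref{pmds2} and \ref{pmds3} to get that $A$, $B_0$ and $A*B_0=C^{\perp}$ are GRS with a common evaluation-point sequence. But at the central step --- establishing the PMDS property --- you have a genuine gap, and you have made the step far harder than it needs to be. The paper's argument is a two-line observation that never touches the Kneser bound: since $A*B\subseteq C^{\perp}$ by E.\ref{it:E1} and $C^{\perp}$ is MDS of dimension $2t$, every nonzero word of $A*B$ has weight at least $d(C^{\perp})=n-2t+1=n-(\dim A+\dim B_0)+2$, which is $>1$; this lower bound meets the Product Singleton upper bound of Proposition \ref{pmds1}, so the pair is PMDS by definition (the definition is about $d(A*B)$ attaining the bound, not about $\dim(A*B)$ as you state). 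Everything after that is exactly as you describe.

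Your substitute route --- bounding $\dim(A*B_0)$ from below via Proposition \ref{kneser1} and showing $\dim\mathrm{St}(A*B_0)=1$ --- leaves precisely the step you flag as ``the main obstacle'' unproven, and neither of your two suggestions for it closes the gap. The full-weight-codeword idea fails: a direct sum of two codes on complementary supports, each containing a word of full weight on its block, contains a word of weight $n$ yet has a stabilizer of dimension $2$, so a full-weight word in $A*B_0$ does not force the stabilizer into $\langle\mathbf 1\rangle$. The ``incompatibility with codimension at most one in an MDS code'' idea can in fact be pushed through (a stabilizer of dimension $s\geq 2$ splits $A*B_0$ over a partition into $s$ blocks, each supporting a nonzero subcode of $C^{\perp}$, and combining the resulting inequalities on block sizes and dimensions with the Kneser bound eventually contradicts $n>2t+1$), but this requires a careful argument you have not supplied, for a fact the paper obtains for free. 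As written, the proposal does not constitute a proof.
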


\begin{proof}
Let $C$ be an $[n, n-2t, 2t+1]$ code over $\fq$ that has a $t$-error correcting pair
$(A,B)$ over a finite extension of $\fq$.
Then, as argued before in the proof of Theorem \ref{mainthm}, $A$ has parameters $[n,t,n-t+1]$ and, after a possible extension of the field $\mathbb F_{q^m}$, there exists a subcode $B_0$ of $B$ with parameters $[n,t,n-t+1]$ such that $(A,B_0)$ is a $t$-ECP for $C$. 
Take notice that $\dim B \geq \dim B_0$ and $d(B) \leq d(B_0)$. 

Now $A*B \subseteq C^\perp$
and $d(C^\perp )= n-2t+1$, since $C^\perp$ is also MDS. Therefore
$$
d(A*B) \geq d(C^\perp ) = n-2t+1 = n - ( \dim A +\dim B) +2.
$$
Furthermore $n-2t+1 >1$, since $d=2t+1\leq n$. Hence $(A,B)$ is a PMDS pair by Proposition \ref{pmds1}.

Now $\dim A =t+1$, $\dim B=t\geq 2$ and $n > \dim A + \dim B$ by assumption.
So $A*B$ is MDS of dimension $n-d(A*B)+1 = 2t$, by Proposition \ref{pmds2}.
Hence $A*B=C^\perp$.
So $A$, $B$ and $C^{\perp} = A*B$ are GRS codes with a common evaluation-point sequence by Proposition \ref{pmds3}.
\end{proof}

\section{Conclusion}

The class of MDS codes of minimum distance $2t+1$ that have a $t$ error correcting pair coincides with the class of generalized Reed-Solomon codes.
Two proofs are given, the second one uses a recent result \cite{mirandola:2015} on the Schur product of codes.

Since GRS codes can be viewed as algebraic geometry (AG) codes on the projective line, that is the curve of genus zero
and AG codes have ECP's, one might ask whether there is a converse such as the Main Theorem but now for AG codes of genus larger than zero.

\section*{Acknowledgement}
This work arose from the discussion with Naomi Benger, Julia Borghoff and the authors in the Research Retreat of
the {\em Code-Based Cryptography Workshop}, May 2011 at the Technical University of Eindhoven.
It was presented by the first author at the {\em Code-Based Cryptography Workshop}, May 2012 at the Technical University of Denmark, Lyngby and also at the conference {\em Applications of Computer Algebra}, July 2015 at Kalamata.

\bibliographystyle{plain}
\bibliography{ECP-MDS}
\end{document}